\newcommand{\PP}{\mathbb{P}}
\newcommand{\EE}{\mathbb{E}}
\newcommand{\RR}{\mathbb{R}}
\newcommand{\NN}{\mathbb{N}}
\newcommand{\spt}{\mathrm{spt}}
\newcommand{\limk}{\underset{k\to\infty}{\longrightarrow}}
\newtheorem{assumption}{Assumption}
\newtheorem{remark}{Remark}
\newtheorem{example}{Example}
\newcommand{\add}[1]{#1}
\newcommand{\del}[1]{}
\title{\bf Linear conic optimization\\ for inverse optimal control\footnote{This work was partly funded by the ERC Advanced Grant Taming.}}
\begin{document}

\author{Edouard Pauwels$^{1}$, Didier Henrion$^{2,3,4}$, Jean-Bernard Lasserre$^{2,3}$}
\footnotetext[1]{IRIT-IMT; Universit\'e Paul Sabatier; 118 route de Narbonne 31062, Toulouse Cedex 9, France.}
\footnotetext[2]{CNRS; LAAS; 7 avenue du colonel Roche, F-31400 Toulouse; France.}
\footnotetext[3]{Universit\'e de Toulouse;  LAAS, F-31400 Toulouse, France.}
\footnotetext[4]{Faculty of Electrical Engineering, Czech Technical University in Prague,
Technick\'a 2, CZ-16626 Prague, Czech Republic}

\date{Draft of \today}

\maketitle

\begin{abstract}
We address the inverse problem of Lagrangian identification based on trajectories in the context of nonlinear optimal control. We propose a general formulation of the inverse problem based on occupation measures and complementarity in linear programming. The use of occupation measures in this context offers several advantages from the theoretical, numerical and statistical points of view. We propose an approximation procedure for which strong theoretical guarantees are available. Finally, the relevance of the method is illustrated on academic examples.
\end{abstract}


\section{Introduction}

In the context of nonlinear optimal control, we are interested in the inverse problem of Lagrangian identification from given trajectories. This identification should be carried out such that solving the direct optimal control problem with the identified Lagrangian would allow to recover the given trajectories.

Inverse problems of calculus of variations are old topics that have attracted a renewal of interest in the context of optimal control, especially in humanoid robotics \cite{arechavaleta2008optimality}. Relevant aspects of the problem are not well understood and many issues still need to be addressed  to propose a tool that could be used in experimental settings. The work presented here constitutes a step in this direction. A preliminary conference version \cite{pauwels2014inverse} originally introduced our optimization framework as a tool to solve the inverse problem numerically. The current paper extends this work in many ways.
In particular, by using the (quite general) concept of {\it occupation measures} we can propose a broad definition of inverse optimality and we also rigorously
justify most of the approximations behind the numerical results reported in \cite{pauwels2014inverse}. Many aspects of this work parallel the results of \cite{lasserre2008nonlinear} about {\it direct} optimal control with polynomial data.

\subsection{\add{Motivation}}
\add{
The principle of optimality (or stationarity) is very important as a conceptual tool to describe laws of phenomenon are observed in nature (\textit{e.g.} Fermat's principle in optics, Lagrangian dynamics in mechanics). Beyond physics, similar tools and arguments are used to describe and model the behaviour of living systems in biology \cite{rosen1967optimality} or decision making agents in economics \cite{kamien1991dynamic}. Of more important interest to us is the application of the optimality principle to model the motion of living organisms \cite{todorov2004optimality}. In our technological context, this constitutes a hot topic. Promising expectations for these types of model include:
\begin{itemize}
	\item The conceptual understanding of general laws that govern decision taking processes related to living organism motion, including human motion \cite{arechavaleta2008optimality}.
	\item The {ability} to use these general laws to reproduce and synthetise motion behaviours for new tasks with unknown space configuration.
\end{itemize}
In this context, the principle of optimality only constitutes one possible conceptual tool to understand motion. There is a debate regarding its validity \cite{friston2011what} or its direct applicability in robotics applications \cite{laumond2014optimality}. These illustrate the fact that this idea constitutes an active subject of research, with a strong connextion with  applications. 

In many situations however, the cost related to the motion of a system is unknown or does not correspond to direct intuition. In these cases, as clearly emphasized in \cite{todorov2004optimality}: ``It would be very useful to have a general data analysis procedure that infers the cost function given experimental data and a biomechanical model''. {Our contribution is to investigate} the mathematical meaning of ``inferring cost function from data'' and we propose a numerical method to address problems of this type based on inverse optimality. {We emphasize} that this paper is ``only'' concerned with this question. {In particular we do not address the issue of interpreting} the inferred cost function or solving direct problems {for} new unseen conditions. We solely focus on the task of inferring a cost function from data. This constitutes a nontrivial shift in term of point of view compared to usual questions arising when dealing with direct optimal control problems. We hope to convince the reader that there are crucial differences between inverse and direct optimal control and that it is worth investigating the former within an appropriate context with somewhat different questions in mind.

The backbone of the proposed approach and its relation with the direct problem of optimal control is presented in Figure \ref{fig:directIllustr}. It is important to understand the symmetric role of the Lagrangian and the occupation measure representing the input trajectories. As a matter of fact, since the input of the inverse problem is a set of trajectories {(supposedly optimal for a certain Lagrangian)}, many aspects of the existence of minimizers 
{that are crucial in direct optimal control, are not relevant} for inverse problems since the {``optimal"} trajectories are given. For example, {there is no need to recompute} optimal trajectories for direct problems with initial conditions {already considered in the input data} since by inverse optimality, the {input trajectories are optimal} with respect to the identified Lagrangian.
}
\add{
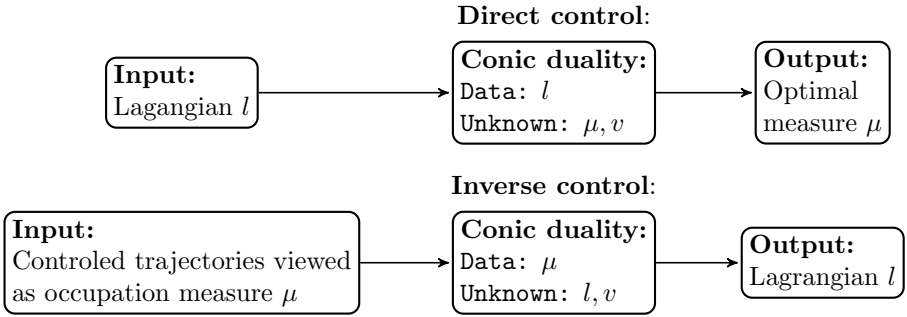
\begin{figure}
	\begin{center}
		\begin{tikzpicture}[
		  every matrix/.style={ampersand replacement=\&,column sep=1cm,row sep=.1cm},
		  oneBox/.style={draw,thick,rounded corners,inner sep=.1cm,align=left},
		  twoBox/.style={align=left},
		  to/.style={->,>=stealth',shorten >=1pt,semithick,font=\sffamily\footnotesize}]
		
		  \matrix{
				\node[twoBox] (box01) {\underline{\textbf{System description}: $f, X, U$}};\&\\
				\&\node[twoBox] (box01) {\flushleft{\textbf{Direct control}:}};\\
				\node[oneBox] (box1) {\textbf{Input:}\\Lagangian $l$};
				\& \node[oneBox] (box2) {\textbf{Conic duality:}\\
																			\texttt{Data:} $l$\\
																			\texttt{Unknown:} $\mu, v$
		
							};
				\& \node[oneBox] (box3) {\textbf{Output:}\\Optimal\\measure $\mu$};\\
				\&\\
				\&\\
				\&\node[twoBox] (box01) {\textbf{Inverse control}:};\\
				\node[oneBox] (box21) {\textbf{Input:}\\Controled trajectories viewed\\as occupation measure $\mu$};
				\& \node[oneBox] (box22) {\textbf{Conic duality:}\\
																			\texttt{Data:} $\mu$\\
																			\texttt{Unknown:} $l, v$
		
							};
					\& \node[oneBox] (box23) {\textbf{Output:}\\Lagrangian $l$};\\
		  };
		
		  \draw[to] (box1) -- (box2);
		  \draw[to] (box2) -- (box3);
		  \draw[to] (box21) -- (box22);
		  \draw[to] (box22) -- (box23);
		\end{tikzpicture}
	\end{center}
	\caption{\label{fig:directIllustr} \add{Direct optimal and inverse optimal control flow chart. System description is given by the dynamics $f$, the state constraint set $X$ and control constraint set $U$ which are all assumed to be fixed. We emphasize that the Lagrangian and the occupation measure have symmetric roles for the direct and inverse problems. We also note that the output of the inverse problem is a Lagrangian. {Solving} the direct optimal control problem for new initial conditions is an important question but remains secondary regarding inverse optimality which is the focus of this work.}}
\end{figure}
}
\subsection{{Context}}
Since its introduction by Kalman \cite{kalman1964linear}, the inverse problem of optimal control has been studied in linear settings \cite{anderson1971linear,jameson1973inverse,fujii1984complete,nori2004linear} leading to many nonlinear variations \cite{thau1967inverse, moylan1973nonlinear,casti1980general,freeman1996inverse}. In these works the input data of the problem is a characteristic of a class of trajectories often given in the form of a control law. This contrasts with the setting we propose to study, for which the input is a set of trajectories which could come from physical experiments. This motivates the work of \cite{chittaro2013inverse} and \cite{ajami2013humans} about well-posedness of the inverse problem, both in the context of unicycle dynamics in robotics and strictly convex positive Lagrangians.

On the other hand,  to treat the inverse problem  several authors have proposed numerical methods
based on the ability to solve the direct problem \cite{mombaur2010human}, also in the context of Markov decision process \cite{abbeel2004apprenticeship,ratliff2006maximum} or based on a discretized version of the direct problem \cite{puydupin2012convex,keshavarz2011imputing}.

Our approach is different and based on occupation measures, an abstract and quite general tool to handle trajectories (\add{and their weak limits}) of feasible solutions of classical control problems. Formulating the (direct) control problem on appropriate spaces of measures amounts to relaxing the original problem. In most applications, both relaxed and original problems have same optimal value \cite{vinter1993convex,vinter1978equivalence,gaitsgory2009linear}. However the relaxed formulation has the crucial advantage that compactness holds in a certain weak sense: As a matter of fact, many optimization problems over appropriate spaces of measures attain their optimum, whereas most optimization problems over smaller functional spaces (e.g. continuous functions, or Lebesgue integrable functions) typically have {\it no} optimal solution. 
At last but not least, for control problems with polynomial data, the relaxed problem can be formulated as an optimization problem
on {\it moments} of occupation measures. By  combining this  with relatively recent 
advances in real algebraic geometry \cite{putinar1993positive} and in numerical optimization \cite{lasserre2010} one may thus
provide a systematic numerical scheme 
to approximate effectively relaxed solutions of optimal control problems \cite{lasserre2008nonlinear}.

\subsection{Contribution}
We choose the setting of free terminal time optimal control which is consistent with many physical experiments that one can think of.
But the same approach with {\it ad hoc} modifications is also  valid in the fixed terminal time setting.

$\bullet$ In our opinion,
occupation measures are the perfect abstract tool to formally express the fact that we consider a (possibly uncountably infinite) superposition of trajectories as input data of the inverse control problem. We then propose a general formulation of the inverse problem based on occupation measures and complementarity in linear programming. A relaxation of the well known Hamilton-Jacobi-Bellman (HJB) sufficient optimality condition appears in our formulation as for the usual direct optimal control problem \cite{hernandez1996linear}. This formulation is shown to be consistent with what is commonly expected regarding inverse optimality. 

It is worth noting that when using the HJB optimality conditions, the situation is completely symmetric for the direct and inverse control problems. In both cases the HJB optimality conditions are used to {\it certify the global optimality of trajectories}. But in the former 
the Lagrangian is known and HJB provide conditions on the optimal state-control trajectories (to be determined) whereas in the latter
the ``optimal" state-control trajectories are known and HJB provide conditions on the Lagrangian (to be determined) for the given trajectories to be optimal.
(In both cases the optimal value function is considered as an auxiliary ``variable".)

$\bullet$ Furthermore, this framework allows to further characterize the space of solutions associated with a given inverse optimal control problem. 
This viewpoint is different from what has been proposed in previous (theoretical and numerical) contributions to  this problem \cite{mombaur2010human,puydupin2012convex,chittaro2013inverse,ajami2013humans} which, implicitly or explicitly, involve strong
(and, in our opinion, overly restrictive) constraints on the class of functions in which the candidate Lagrangians are searched.

\add{$\bullet$ {The weak formulation of 
direct optimal control problems via occupation measures is elegant and powerful but also involves difficult technical questions regarding} potential gaps between classical and generalized control problems. Using inverse optimality, we justify \textit{a posteriori} that this discussion can be partially mitigated for the inverse problem. This striking difference between direct and inverse problems is {due to the symmetric roles of the Lagrangian and occupation measure} and the fact that the occupation measure is given and fixed for the inverse problem.} 

$\bullet$ Remarkably, despite the abstract setting of occupation measures,
the proposed formulation is amenable to explicit numerical approximations via a hierarchy of semi-definite programs\footnote{
A semi-definite program is a finite-dimensional linear optimization problem over the cone of non-negative quadratic forms for which
powerful primal-dual interior-point algorithms are available \cite{vandenberghe1996semidefinite}.}.
Indeed in the context of polynomial dynamics and semi-algebraic constraints, both the optimal value function and Lagrangian
used in the (relaxed) HJB optimality conditions can be approximated with polynomials. We show that such a reinforcement is coherent in the sense that
no polynomial solution to the inverse problem is lost.

$\bullet$ Finally, in usual experimental settings one does not have access to complete  trajectories. Instead one is rather given
finitely many data points sampled from trajectories. But
results from probability applied to our occupation measures allow to formalize the fact that we only work with ``samples''. 
In addition, in this framework one may use empirical processes and statistical learning theory \cite{vapnik1999overview,bousquet2004introduction} to provide bounds on the error made when working with samples instead of original trajectories.

\paragraph{Organization of the paper.} In Section \ref{sec:prelim} we provide the context and background on optimal control and occupation measures. In Section \ref{sec:iocp}, we present our characterization of solutions to the inverse optimal control problem and illustrate how it allows to further discuss about
the set of solutions and links with the direct optimal control problem. Numerical approximations via polynomials and statistical approximations via
finite samples are provided and discussed  in Section \ref{sec:practical}. The resulting numerical scheme
(with proven strong theoretical guarantees) can be implemented with off-the-shelf software on a standard computer.
Finally, Section \ref{sec:numerics} describes numerical results on academic examples.

\section{Preliminaries}
\label{sec:prelim}

\subsection{Notations}

If $A$ is a compact subset of a finite-dimensional Euclidean space, let $\mathcal{C}(A)$ resp.  $\mathcal{C}^1(A)$ denote the set of continuous resp. continuously differentiable functions from $A$ to $\RR$. Let $\mathcal{M}(A)$ denote the space of Borel measures on $A$, the topological dual of $\mathcal{C}(A)$ with duality bracket denoted by $\left\langle.,.\right\rangle$, i.e. $\left\langle \mu,f \right\rangle = \int_A f(x)d\mu(x)$ is the integration on $A$ of a function $f \in \mathcal{C}(A)$ with respect to a measure $\mu \in \mathcal{M}(A)$. Let $\mathcal{M}_+(A)$ resp. $\mathcal{C}_+(A)$ denote the cone of non-negative Borel measures resp. non-negative continuous functions on $A$. The support of a measure $\mu \in \mathcal{M}_+(A)$ is denoted by $\spt\:\mu$.
An element $\mu \in \mathcal{M}_+(A)$ such that $\left\langle \mu,1 \right\rangle = 1$
is called a probability measure. Let $\delta_x$ denote the Dirac measure concentrated on $x$ and let $I(e)$ denote the indicator function of an event $e$, equal to $1$ if $e$ is true, and $0$ otherwise.

Let $X \subseteq \RR^{d_X}$ denote the state space and $U \subseteq \RR^{d_U}$ denote the control space which are supposed to be compact subsets of Euclidean spaces. System dynamics are given by a continuously differentiable vector field $f \in \mathcal{C}^1(X \times U)^{d_X}$. Terminal state constraints are modeled by a set $X_T \subset X$ which is also given.
Let $B_n$ denote the unit ball of the Euclidean norm in $\RR^n$, 
and let $\partial S$ denote the boundary of set $S$ in the Euclidean space.   Let
$\RR[z]$ denote the set of multivariate polynomials with real coefficients with variables $z$
and let $\RR_k[z]$ denote the set of such polynomials with degree at most $k$. For a polynomial $p \in \RR_k[z]$,
we denote by $\|p\|_1$ the sum of the absolute values of the coefficients of $p$ when expanded in the monomial basis.

\subsection{Context: free terminal time optimal control}
We consider direct optimal control problems of the form:
\begin{equation}\label{eq:pdirect0}
	\tag{ocp$_0$}
\begin{array}{ll@{\;}l@{}}
	v_0(z) :=&  \displaystyle\inf_{u, T} & \displaystyle\int_0^T l_0(x(t), u(t)) dt \\
	& \mathrm{s.t.} & \dot{x}(t) = f(x(t), u(t)), \\
           && x(t) \in X,\,u(t) \in U,\, t \in [0,T],\\
	&&x(0) = z, \,x(T) \in X_T,\\
	&&T \in [0,T_M]
\end{array}
\end{equation}
with Lagrangian $l_0 \in \mathcal{C}(X \times U)$ and free final time $T$ with a given upper bound $T_M$ which ensures that
the value function $v_0$ is bounded below. Dynamics $f$ are given, as well as the sets $X$, $U$ and $X_T \subset X$. We assume that
a set $X_0 \subset X$ is given such that the following assumption is satisfied:
\begin{assumption}
	\label{ass:feasibility}
	For all initial conditions $z \in X_0$, problem (\ref{eq:pdirect0}) is feasible.
\end{assumption}

\subsection{Occupation measures}
\label{sec:OM}
\add{In this section we describe how to construct an occupation measure from a feasible trajectory of (\ref{eq:pdirect0}) and then from a set of such trajectories. The content of this section was already described in the litterature (see for example \cite{lasserre2008nonlinear,henrion2014convex, henrion2014optim}) {and} we include these notions here for completeness. Let $z \in X_0$ be an initial point. We use Assumption \ref{ass:feasibility} to fix a trajectory starting from $z$. That is, a terminal time $T_z \in \RR_+$, a measurable control $u_z \colon [0, T_z] \to U$ and an absolutely continuous trajectory $x_z \colon [0, T_z] \to X$ such that
\begin{equation}\label{eq:supperpos1}
	\begin{array}{c}
\dot{x}_z(t)  = f(x_z(t), u_z(t)),\\
x_z(0) = z, \:\: x_z(T_z) \in X_T.
	\end{array}
\end{equation}
 
The occupation measure of the corresponding trajectory is denoted by $\mu_z$ and is defined by
\begin{equation}\label{eq:supperpos1.1}
		\mu_z(A \times B)  := \int_0^{T_z} I(x_z(t) \in A, \;u_z(t) \in B)dt
\end{equation}
for every Borel sets $A \subset X$ and  $B \subset U$. We now turn to the construction of \textit{occupation measure} and \textit{terminal measure} of a set of trajectories by taking a measurable combination of occupation measures of single trajectories}. Consider a probability measure $\mu_0 \in \mathcal{M}_+(X_0)$ \add{and an upper bound on terminal time $T_M$}. Thanks to Assumption \ref{ass:feasibility}, for each $z \in  \spt\:\mu_0$, we fix a terminal time $T_z \in [0,T_M]$, a measurable control $u_z \colon [0, T_z] \to U$ and an absolutely continuous trajectory $x_z \colon [0, T_z] \to X$ such that (\ref{eq:supperpos1}) holds. 
\add{That is, for each $z \in  \spt\:\mu_0$, we have an occupation measure $\mu_z$ as described in (\ref{eq:supperpos1.1}).} The occupation measure $\mu \in \mathcal{M}_+(X \times U)$ and terminal measure $\mu_T \in \mathcal{M}_+(X_T)$ of the set of trajectories $\{x_z(t)\}_{z \in \spt\:\mu_0, t \in [0,T_z]}$ are then defined \add{by tacking a convex combination of each $\mu_z$ according to $\mu_0$} (see also \cite[Chapter 5]{henrion2014optim} and \cite[Section 3]{henrion2014convex}). We obtain the following definition:
\begin{equation}\label{eq:supperpos2}
	\begin{array}{ccl}
		\mu(A \times B) & :=& \displaystyle \int_{X_0} \mu_z(A, B)  \mu_0(dz),\\
		&=& \displaystyle \int_{X_0} \left(\int_0^{T_z} I(x_z(t) \in A, \;u_z(t) \in B)dt \right) \mu_0(dz),\\
		\mu_T(A) &:=& \displaystyle \int_{X_0} I(x_z(T_z) \in A)\; \mu_0(dz),
	\end{array}
\end{equation}
for every Borel sets $A \subset X$ and  $B \subset U$. With the previous definition, 
\[
	\left\langle \mu, l \right\rangle = \int_{X_0} \left(\int_0^{T_z} l(x_z(t), u_z(t))dt \right) \mu_0(dz),\quad\forall l \in \mathcal{C}(X \times U).
\]
In particular
\[
\mu(X \times U) = \left\langle \mu,1\right\rangle = \int_{X_0} T_z\; \mu_0(dz).
\]
Furthermore for every $v \in \mathcal{C}^1(X)$, 
\begin{equation}\label{eq:liouville1}
	\begin{array}{ccl}
		\left\langle\mu, \mathrm{grad}\:v \cdot f\right\rangle &=& \displaystyle \int_{X_0} \left(\int_0^{T_z} \mathrm{grad}\:v(x_z(t)) \cdot f(x_z(t), u_z(t))\;dt\right) \mu_0(dz)\\
		&=& \displaystyle \int_{X_0} \left(v(x_z(T_z)) - v(x_z(0)) \right)\mu_0(dz) \\
		&=& \left\langle \mu_T, v \right\rangle - \left\langle\mu_0, v \right\rangle,
	\end{array}
\end{equation}
where ``$\mathrm{grad}$" denotes the gradient vector of first order derivatives of $v$, and the ``dot"
denotes the inner product between vectors.
Equation (\ref{eq:liouville1}) is known as Liouville's equation and is also written as
\begin{equation}\label{eq:liouville2}
	\mathrm{div}f \mu + \mu_T = \mu_0,
\end{equation}
	where the divergence is to be interpreted in the weak sense and a change of sign comes from integration by part. As we have seen, occupation and terminal measures as defined in (\ref{eq:supperpos2}) satisfy the Liouville equation (\ref{eq:liouville2}). \add{This motivate the following broader definition. 
		\begin{definition}\label{def:occMeasure}
		A general occupation measure is a measure that satisfies Liouville's equation (\ref{eq:liouville2}), for some terminal measure $\mu_T \in \mathcal{M}_+(X_T)$, in the weak sense described in (\ref{eq:liouville1}).
	\end{definition} 

	We have seen in this section how to construct an occupation measure from a set of feasible trajectories of (\ref{eq:pdirect}). However the set of all occupation measures is in general much bigger than the set of measures arising in this way.}
\subsection{\add{Input of the inverse optimal control problem}}
\add{For inverse optimal control, we suppose that the trajectories are given. Moreover, the Liouville equation and positivity constraints are sufficient to develop all the aspects of our analysis of inverse optimality. 
\begin{center}
	{\it Therefore, independently of how it is constructed, the input data of our inverse control problem is a general occupation measure as given by Definition \ref{def:occMeasure}. }
\end{center}
This restriction is made without loss of generality regarding classical trajectories because, from the construction in (\ref{eq:supperpos2}), we consider an input set that contains all of them. All the results will in particular apply to situations when the occupation measure is a superposition of classical trajectories as described in (\ref{eq:supperpos2}). The results will also hold if this is not the case and the input measure involves generalized control. Finally and most importantly, this construction allows to {formally} treat cases for which we are given a possibly uncountably infinite number of trajectories as input data and is therefore much more general than considering one or a few classical trajectories.}

\subsection{Direct optimal control}\label{sec:OCP}
Using the formalism of occupation measures, given a continuous Lagrangian $l$, an initial measure $\mu_0$ and a maximal terminal time $T_M$, we consider direct optimal control problems of the form
\begin{equation}\label{eq:pdirect}
	\tag{ocp}
\begin{array}{ll@{\;}l@{}}
	p^*(\mu_0) :=& \displaystyle\inf_{\mu, \mu_T} & \left\langle \mu, l\right\rangle \\
	& \mathrm{s.t.} & \mathrm{div}f \mu + \mu_T = \mu_0,\\
           && \left\langle \mu,1 \right\rangle \leq T_M,\\
					 &&\mu \in \mathcal{M}_+(X \times U),\\
					 &&\mu_T \in \mathcal{M}_+(X_T).
\end{array}
\end{equation}

\begin{definition}[OCP]
$\mathrm{OCP}(l, \mu_0, T_M)$ is the set of measures $(\mu, \mu_T)$ solving problem (\ref{eq:pdirect}). 
\end{definition} 

Note that by Lemma \ref{lem:duality} and Assumption \ref{ass:feasibility}, set $\mathrm{OCP}(l, \mu_0, T)$ is not empty.
The link between problems (\ref{eq:pdirect0}) and (\ref{eq:pdirect}) is far from trivial. \del{The decision variables
$(\mu,\mu_T)$  in (\ref{eq:pdirect}) can be viewed as a superposition of limiting objects arising from problem (\ref{eq:pdirect0})}. It is possible to construct problems for which measures considered in problem (\ref{eq:pdirect}) do not arise in this way which may introduce spurious minimizers which are far from classical trajectories of problem (\ref{eq:pdirect0}), \add{see for example \cite[Appendix C]{henrion2014convex}}. These problems are usually overly constrained and not physically relevant, and in most practical settings, we have
\[
	p^*(\delta_z) = v_0(z)\qquad \forall \,z \in \spt\:\mu_0,
\]
which we could see as an assumption on the inverse problem data. \add{In this constrained setting, sufficient conditions for this property to hold are those that ensure the applicability of the Filippov-Wa\.{z}ewski Theorem, see \cite{frankowska2000filippov} and the discussion around \cite[Assumption I]{gaitsgory2009linear}, \cite[Assumption 2]{henrion2014convex}, \cite[Assumption 1]{henrion2014linear}. Under such sufficient conditions, it can be shown using \cite[Theorem  2.3]{vinter1993convex} that the equality holds. However, as we argued in the introduction, the link between (\ref{eq:pdirect0}) and (\ref{eq:pdirect}) is much less problematic when considering inverse optimality. The main reason is that we consider that the input of the inverse problem is a measure, which is therefore given and fixed. It could arise as in (\ref{eq:supperpos2}) but not necessarily (see Figure \ref{fig:directIllustr}). We would like to emphasize the following:
\begin{itemize}
	\item if the input occupation measure does not satisfy (\ref{eq:supperpos2}), then it does not make sense to consider (\ref{eq:pdirect0}) as a basis for inverse optimality since the input of the problem itself is more general than the classical controls considered in (\ref{eq:pdirect0}). In this case, it is more relevant to focus on (\ref{eq:pdirect}) only.
	\item if the input occupation satisfy (\ref{eq:supperpos2}), then, the analysis is still valid. In this case, since the input of the inverse problem involves classical controls, {the question of the link between (\ref{eq:pdirect0}) and (\ref{eq:pdirect}) is a real issue for direct optimal control}. {But in the context} of inverse optimality, a partial answer is given \textit{a posteriori} by Corollary \ref{cor:noGap}. It is shown that, even in this case, considering (\ref{eq:pdirect}) as a basis for inverse optimality does not allow to identify Lagrangians for which there is a gap between (\ref{eq:pdirect0}) and (\ref{eq:pdirect}) for all considered initial conditions in $\spt \; \mu_0$, except for a $\mu_0$-negligible subset.
\end{itemize}

}

 \add{{For these reasons we adopt the following convention\\
 
\begin{center}
	{\it All our analysis refers to direct control problems of the form of (\ref{eq:pdirect}). }
\end{center}}}
\vspace{0.2cm}
{and the link with (\ref{eq:pdirect0}) (when it makes sense) will be \textit{a posteriori} justified by Corollary \ref{cor:noGap}: }
The corresponding conic dual can be written as
\begin{equation}\label{eq:pdirectDual}
	\tag{hjb}
\begin{array}{ll@{\;}l@{}}
	d^*(\mu_0) :=&  \displaystyle\sup_{v, w} & \left\langle \mu_0, v\right\rangle - w T_M\\
	& \mathrm{s.t.} & l + w + \mathrm{grad}\:v \cdot f \in \mathcal{C}_+(X \times U)\\
	&&-v \in \mathcal{C}_+(X_T),\\
          && w \geq 0,\\
	&&v \in \mathcal{C}^1(X), \:\: w \in {\mathbb R}.
\end{array}
\end{equation}
The first two constraints $l + w + \mathrm{grad}\:v \cdot f \in \mathcal{C}_+(X \times U)$ and $-v \in \mathcal{C}_+(X_T)$ of 
(\ref{eq:pdirectDual})
are relaxations of the well-known Hamilton-Jacobi-Bellman (HJB) sufficient condition of optimality \cite{athans1966optimal,bardi2008optimal}. 
Conic duality provides the following link between the problems (\ref{eq:pdirect}) and (\ref{eq:pdirectDual}).
\begin{lemma}
	\label{lem:duality}
	The infimum in (\ref{eq:pdirect}) is attained and there exists a maximizing sequence in (\ref{eq:pdirectDual}). In addition, for any feasible primal pair $(\mu, \mu_T)$ and any sequence of dual variables $v_k \in \mathcal{C}^1(X)$ and $w_k \in {\mathbb R}$,
$k \in \NN$, the following assertions are equivalent
	\begin{itemize}
		\item $(\mu, \mu_T)$ is optimal for (\ref{eq:pdirect}) and $(v_k, w_k)_{k\in\NN}$ is a maximizing sequence for (\ref{eq:pdirectDual});
		\item strong duality:
			\begin{equation}
				\label{eq:pdirectSlack1}
				\left\langle \mu_0, v_k\right\rangle - w_k T_M \limk \left\langle \mu, l\right\rangle;
			\end{equation}

		\item complementarity:
			\begin{equation}
				\label{eq:pdirectSlack2}
				\begin{array}{rcl}
					w_k(\left\langle\mu,1\right\rangle - T_M) &\limk& 0,\\
					\left\langle \mu, l + w_k + \mathrm{grad}\:v_k \cdot f \right\rangle &\limk& 0,\\
					\left\langle \mu_T, v_k \right\rangle &\limk& 0.
				\end{array}
			\end{equation}
	\end{itemize}
\end{lemma}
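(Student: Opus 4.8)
The plan is to view (\ref{eq:pdirect}) and (\ref{eq:pdirectDual}) as a primal--dual pair of linear programs over Banach spaces and to establish, in this order: (i) attainment of the primal infimum; (ii) feasibility of the dual and hence existence of a maximizing sequence; (iii) weak duality through an explicit nonnegative decomposition of the primal--dual gap; (iv) absence of a duality gap; and then to read off the three-way equivalence from (iii) and (iv) by elementary manipulations. I expect (iv) to be the only genuinely delicate point.

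For (i), I would first note that (\ref{eq:pdirect}) is feasible: by Assumption~\ref{ass:feasibility} the construction (\ref{eq:supperpos2}) produces a pair $(\mu,\mu_T)$ satisfying Liouville's equation with $\langle\mu,1\rangle=\int_{X_0}T_z\,\mu_0(dz)\le T_M$. Then I would show the feasible set is weak-$*$ compact: the mass of $\mu$ is bounded by $T_M$, and testing (\ref{eq:liouville1}) with $v\equiv1$ forces $\langle\mu_T,1\rangle=\langle\mu_0,1\rangle$, bounding the mass of $\mu_T$; since $X\times U$ and $X_T$ are compact, $\mathcal{C}(X\times U)$ and $\mathcal{C}(X_T)$ are separable, so bounded balls of measures are sequentially weak-$*$ compact and metrizable, while the feasible set is weak-$*$ closed (the Liouville equalities are weak-$*$ continuous because $\mathrm{grad}\,v\cdot f\in\mathcal{C}(X\times U)$ and $v\in\mathcal{C}(X_T)$ for $v\in\mathcal{C}^1(X)$, and the mass inequality and the positivity cones are weak-$*$ closed). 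As $l\in\mathcal{C}(X\times U)$, the objective $(\mu,\mu_T)\mapsto\langle\mu,l\rangle$ is weak-$*$ continuous, so the infimum is attained.

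For (ii) and (iii), I would check that $v\equiv0$ with $w$ chosen large enough that $l+w\ge0$ on the compact set $X\times U$ is dual-feasible, so $d^*(\mu_0)>-\infty$ and a maximizing sequence exists. Weak duality then comes from substituting $\langle\mu_0,v\rangle=\langle\mu_T,v\rangle-\langle\mu,\mathrm{grad}\,v\cdot f\rangle$ (Liouville, (\ref{eq:liouville1})) into the gap, which for primal-feasible $(\mu,\mu_T)$ and dual-feasible $(v,w)$ gives
\[
\langle\mu,l\rangle-\bigl(\langle\mu_0,v\rangle-wT_M\bigr)=\bigl\langle\mu,\,l+w+\mathrm{grad}\,v\cdot f\bigr\rangle+\langle\mu_T,-v\rangle+w\bigl(T_M-\langle\mu,1\rangle\bigr)\geq 0,
\]
a sum of three nonnegative terms, hence $p^*(\mu_0)\ge d^*(\mu_0)$. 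The hard part is (iv), $p^*(\mu_0)=d^*(\mu_0)$: I would obtain it from the standard duality theorem for infinite-dimensional linear programs as used in \cite{hernandez1996linear} (and, in a closely parallel setting, \cite{lasserre2008nonlinear}), whose hypothesis reduces to weak-$*$ closedness of the image of the cone $\{(\mu,\mu_T,s):\mu\in\mathcal{M}_+(X\times U),\,\mu_T\in\mathcal{M}_+(X_T),\,s\ge0\}$ under the map $(\mu,\mu_T,s)\mapsto(\mathrm{div}f\,\mu+\mu_T,\ \langle\mu,1\rangle+s,\ \langle\mu,l\rangle)$; this closedness follows from the weak-$*$ compactness of the sublevel sets of the feasible region established in (i), together with continuity of $f$ and $l$. (Alternatively, one could construct directly, for each $\varepsilon>0$, a dual-feasible pair with value at least $p^*(\mu_0)-\varepsilon$ by smoothing an approximate value function, but the general-theorem route is cleaner.)

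Finally, with $p^*(\mu_0)=d^*(\mu_0)$ attained and finite, I would set, for a dual-feasible sequence $(v_k,w_k)$ (feasibility being implicit in the notion of a maximizing sequence), $\Delta_k:=\langle\mu,l\rangle-(\langle\mu_0,v_k\rangle-w_kT_M)$, which by the identity above equals the sum of the three nonnegative quantities appearing in (\ref{eq:pdirectSlack2}). Since a finite sum of nonnegative reals tends to $0$ if and only if each summand does, (\ref{eq:pdirectSlack1}) $\Leftrightarrow$ (\ref{eq:pdirectSlack2}). For the link with the first assertion: if $(\mu,\mu_T)$ is optimal and $(v_k,w_k)$ maximizing, then $\langle\mu,l\rangle=p^*(\mu_0)=d^*(\mu_0)=\lim_k(\langle\mu_0,v_k\rangle-w_kT_M)$, so $\Delta_k\to0$; conversely, if $\Delta_k\to0$ then $0\le\langle\mu,l\rangle-p^*(\mu_0)=\langle\mu,l\rangle-d^*(\mu_0)\le\Delta_k\to0$ (using weak duality and primal feasibility $\langle\mu,l\rangle\ge p^*(\mu_0)$), so $(\mu,\mu_T)$ is optimal, and then $\langle\mu_0,v_k\rangle-w_kT_M=\langle\mu,l\rangle-\Delta_k\to d^*(\mu_0)$, so $(v_k,w_k)$ is maximizing. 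All steps except (iv) are soft functional analysis and the elementary identity above.
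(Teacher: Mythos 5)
Your proposal is correct and follows essentially the same route as the paper: dual feasibility with $v=0$ and $w$ large enough, primal attainment and zero duality gap via weak-$*$ compactness and closedness of the image cone (Banach--Alaoglu plus the standard infinite-dimensional LP duality theorem), and the three-way equivalence read off from the same nonnegative decomposition of the primal--dual gap that appears as the chain of inequalities in the paper's proof. The only differences are that you spell out details the paper leaves as a sketch.
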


\begin{proof}
	We only sketch the proof here, for more details see \cite{lasserre2008nonlinear}. Observe that (\ref{eq:pdirect}) is feasible thanks to Assumption \ref{ass:feasibility} and (\ref{eq:pdirectDual}) is feasible with $w = \max(-\min_{X \times U} l, 0)$ and $v = 0$. Moreover, the cone $\{(\mathrm{div}f \mu + \mu_T, \left\langle \mu, 1\right\rangle, \left\langle \mu, l_0\right\rangle) : \mu \in \mathcal{M}_+(X \times U), \:\mu_T \in \mathcal{M}_+(X_T)\}$ is closed 
	for the weak topology $\sigma(\mathcal{M}_+(X)\times R^2,C_+(X)\times R^2)$ (by using Banach-Alaoglu's Theorem).
Therefore there is no duality gap between (\ref{eq:pdirect}) and (\ref{eq:pdirectDual}) and the optimum is attained in the primal, see e.g. \cite[Theorem IV.7.2]{alexander2002course}. Condition (\ref{eq:pdirectSlack1}) is just a reformulation of strong duality in this context. Equivalence with (\ref{eq:pdirectSlack2}) follows by noticing that for any primal feasible pair $(\mu, \mu_T)$ and dual feasible pair $(v, w)$, 
\[
\begin{array}{lcl}
		\left\langle \mu, l\right\rangle &
		= &\left\langle \mu, l\right\rangle - \left\langle \mathrm{div} f \mu + \mu_T - \mu_0, v\right\rangle\\
		&\geq &\left\langle \mu, l\right\rangle - \left\langle \mathrm{div} f \mu + \mu_T - \mu_0, v\right\rangle + w(
\left\langle \mu,1 \right\rangle - T_M) \\
		&=&\left\langle \mu, l + w +  \mathrm{grad}\:v \cdot f \right\rangle - \left\langle \mu_T, v\right\rangle + \left\langle \mu_0, v\right\rangle -  w T_M \\
		&\geq& \left\langle \mu_0, v\right\rangle - w T_M.
\end{array}
\]
\end{proof}
\begin{remark}
	If the Lagrangian $l$ is strictly positive on $X \times U$, then Lemma \ref{lem:duality}  holds without the constraint $\left\langle \mu,1 \right\rangle \leq T_M$ and without the dual variable $w$. 
\end{remark}

\section{Inverse optimal control}
\label{sec:iocp}
Given a ``set'' of trajectories and model constraints,  the inverse problem of optimal control consists of finding
a Lagrangian for which the trajectories are optimal. Thanks to the framework exposed in the previous section, it is now easy to define what is a solution to the inverse optimal control problem. 

$\bullet$ Firstly, the ``set'' of trajectories will be represented by measures satisfying Liouville equation (\ref{eq:liouville2}) which are part of the data of the inverse problem. 

$\bullet$ Secondly, a Lagrangian $l$ solution to the inverse problem is a continuous function such that $(\mu, \mu_T) \in \mathrm{OCP}(l, \mu_0, T)$ for some $T$ such that $\mathrm{OCP}(l, \mu_0, T)$ is feasible. 

In this section, we propose a rigorous definition of inverse optimality and prove an equivalence result between direct and inverse optimality. To do so, we use Lemma \ref{lem:duality} which ensures that $\mathrm{OCP}(l, \mu_0, T)$ is non empty as long as $T \leq T_M$. Furthermore, it provides a certificate of (sub)optimality. 

\subsection{What is a solution to the inverse optimal control problem?}
We can now formally define what is meant by a solution to the inverse optimal control problem:
\begin{definition}[IOCP and IOCP$_{\epsilon}$]
	\label{def:solution}
	For $\epsilon >0$, given measures $\mu \in \mathcal{M}_+(C\times U)$ and $\mu_T \in \mathcal{M}_+(X_T)$ such that
$\mathrm{div}f \mu + \mu_T \in \mathcal{M}_+(X_0)$, denote by  $\mathrm{IOCP}_\epsilon(\mu, \mu_T)$ the  set of $\epsilon$-optimal solutions to the inverse optimal control problem, namely the set of functions $l \in \mathcal{C}(X \times U)$ such that there exists a function $v \in \mathcal{C}^1(X)$ satisfying
\[
\begin{array}{rcl}
		\left\langle \mu, l +\mathrm{grad}\:v\cdot f\right\rangle &\leq& \epsilon,\\
		l + \mathrm{grad}\:v\cdot f + \epsilon & \in & \mathcal{C}_+(X \times U),\\
		\left\langle \mu_T, v\right\rangle &\geq & -\epsilon,\\
		-v & \in & \mathcal{C}_+(X_T).
\end{array}
\]
Then the set $\mathrm{IOCP}(\mu,\mu_T)$ of solutions to the inverse optimal control is defined by:
\[\mathrm{IOCP}(\mu,\mu_T)\,:=\,\{l\in \mathcal{C}(X \times U):\:  l \in \mathrm{IOCP}_{\epsilon}(\mu,\mu_T)\quad \forall\epsilon > 0\,\}.\]
\end{definition}

\add{Intuitively, Definition \ref{def:solution} states that we can find differentiable suboptimality certificate for any arbitrary precision (see in Remark \ref{rem:aproxSolution}). In addition, the positivity constraint on $l + \mathrm{grad}\:v\cdot f + \epsilon$ ensures that these certificates provide lower bounds on the value of the direct problem (\ref{eq:pdirect0}) for arbitrary initial conditions, even not in $\spt\; \mu_0$}. The main motivation behind this definition of inverse optimality is the following:

\begin{theorem}
	\label{th:solution}
Given $\mu \in \mathcal{M}_+(C\times U)$ and $\mu_T \in \mathcal{M}_+(X_T)$, the set
	$\mathrm{IOCP}(\mu, \mu_T)$ is a convex cone, closed for the supremum norm. Moreover, the following two assertions are equivalent:
	\begin{itemize}
		\item $l \in \mathrm{IOCP}(\mu, \mu_T)$, $\mathrm{div}f \mu + \mu_T = \mu_0 \in \mathcal{M}_+(X_0)$;
		\item $\exists\, T > \left\langle \mu,1\right\rangle$, $(\mu,\mu_T) \in \mathrm{OCP}(l, \mu_0, T)$.
	\end{itemize}
\end{theorem}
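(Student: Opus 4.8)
The plan is to prove the two implications separately, using Lemma~\ref{lem:duality} as the bridge between the ``inverse'' certificates appearing in Definition~\ref{def:solution} and ``direct'' optimality for (\ref{eq:pdirect}). The structural claims (convex cone, closed for the supremum norm) should be handled first and quickly: if $l_1,l_2 \in \mathrm{IOCP}(\mu,\mu_T)$ with certificates $v_1,v_2$ for precision $\epsilon/2$, then $\lambda_1 l_1 + \lambda_2 l_2$ (with $\lambda_i \ge 0$) admits $\lambda_1 v_1 + \lambda_2 v_2$ as a certificate for precision $\max(\lambda_1,\lambda_2)\epsilon$ (or one rescales $\epsilon$), since all four conditions in $\mathrm{IOCP}_\epsilon$ are linear in $(l,v)$; as $\epsilon$ is arbitrary this gives membership. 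Closedness follows because if $l_n \to l$ uniformly and $l_n \in \mathrm{IOCP}_{\epsilon/2}(\mu,\mu_T)$ eventually, then for $n$ large $\|l-l_n\|_\infty \le \epsilon/2$ and the certificate $v_n$ for $l_n$ at precision $\epsilon/2$ also works for $l$ at precision $\epsilon$ (each of the four inequalities degrades by at most $\|l-l_n\|_\infty \langle\mu,1\rangle$ or $\|l-l_n\|_\infty$, so one should actually pick $\epsilon' = \epsilon/(2\max(1,\langle\mu,1\rangle))$ to be safe); again $\epsilon$ arbitrary gives $l \in \mathrm{IOCP}(\mu,\mu_T)$.

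For the equivalence, I would first fix notation: set $\mu_0 := \mathrm{div}f\mu + \mu_T$, assume $\mu_0 \in \mathcal{M}_+(X_0)$, and choose any $T$ with $\langle \mu,1\rangle < T \le T_M$ (shrinking $T_M$ if necessary, or noting $T_M$ only enters as an upper bound). The key observation is that for this $\mu_0$ the pair $(\mu,\mu_T)$ is \emph{primal feasible} for (\ref{eq:pdirect}) with $T$ in place of $T_M$, and the slackness $\langle\mu,1\rangle < T$ means the constraint $\langle\mu,1\rangle \le T$ is inactive, so the optimal dual variable $w$ can be taken to be $0$ (this is exactly the content of the Remark following Lemma~\ref{lem:duality}, up to strict positivity — here the slack makes $w=0$ forced in the limit).

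For the direction ``$l \in \mathrm{IOCP}(\mu,\mu_T) \Rightarrow (\mu,\mu_T)\in \mathrm{OCP}(l,\mu_0,T)$'': take the certificates $v_\epsilon \in \mathcal{C}^1(X)$ from Definition~\ref{def:solution} for a sequence $\epsilon = \epsilon_k \downarrow 0$. I claim $(v_{\epsilon_k}, 0)$ is \emph{asymptotically dual feasible} and a maximizing sequence for (\ref{eq:pdirectDual}) (with $T$), while $(\mu,\mu_T)$ is primal optimal. Indeed the constraint $l + \mathrm{grad}\,v_\epsilon\cdot f + \epsilon \in \mathcal{C}_+(X\times U)$ and $-v_\epsilon \in \mathcal{C}_+(X_T)$ are the two HJB constraints up to an $\epsilon$ perturbation (with $w=0$), and the chain of inequalities in the proof of Lemma~\ref{lem:duality} gives $\langle\mu,l\rangle \ge \langle\mu_0,v_\epsilon\rangle - \langle\mu, l + \mathrm{grad}\,v_\epsilon\cdot f\rangle_{-}\!\cdots$; more carefully, $\langle\mu,l\rangle = \langle\mu, l+\mathrm{grad}\,v_\epsilon\cdot f\rangle - \langle\mu_T,v_\epsilon\rangle + \langle\mu_0,v_\epsilon\rangle \le \epsilon + \epsilon + \langle\mu_0,v_\epsilon\rangle$, using the first and third conditions of Definition~\ref{def:solution}. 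Hence $\langle\mu_0,v_{\epsilon_k}\rangle \to \langle\mu,l\rangle$ from above (and one gets a genuine dual-feasible maximizing sequence by the standard perturbation-and-regularization argument, or by invoking Lemma~\ref{lem:duality}'s strong duality: the duality-gap-free value is $p^*(\mu_0) = \langle\mu,l\rangle$, and since every feasible primal has value $\ge$ limit of dual $\ge \langle\mu,l\rangle$, $(\mu,\mu_T)$ is optimal, i.e.\ in $\mathrm{OCP}(l,\mu_0,T)$). Conversely, if $(\mu,\mu_T)\in\mathrm{OCP}(l,\mu_0,T)$, Lemma~\ref{lem:duality} yields a maximizing sequence $(v_k,w_k)$ with the complementarity relations (\ref{eq:pdirectSlack2}); since $\langle\mu,1\rangle < T$, the first relation $w_k(\langle\mu,1\rangle - T)\to 0$ forces $w_k \to 0$, and then the remaining two relations plus dual feasibility say exactly that for $k$ large, $v_k$ is an $\epsilon$-certificate in the sense of Definition~\ref{def:solution} (the term $w_k$ gets absorbed into $\epsilon$); thus $l \in \mathrm{IOCP}_\epsilon(\mu,\mu_T)$ for every $\epsilon$, i.e.\ $l \in \mathrm{IOCP}(\mu,\mu_T)$, and $\mu_0 \in \mathcal{M}_+(X_0)$ is given.

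The main obstacle I anticipate is the careful bookkeeping in passing between the \emph{asymptotic} (maximizing-sequence) formulation of duality in Lemma~\ref{lem:duality} and the \emph{$\epsilon$-exact} certificates of Definition~\ref{def:solution}, particularly ensuring that $w_k \to 0$ can always be arranged (using the strict slack $\langle\mu,1\rangle < T$) so that $w$ genuinely drops out, and verifying that an $\epsilon$-certificate with $w=0$ but an $\epsilon$-relaxed positivity constraint can be upgraded to an honest element of a maximizing sequence for (\ref{eq:pdirectDual}) — this may need a small mollification/translation argument ($v \mapsto v$, $w \mapsto \epsilon$) rather than being immediate. The rest is linear-algebra-style manipulation of the duality bracket and should be routine.
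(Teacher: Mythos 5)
Your proposal is correct and follows essentially the same route as the paper: convexity and closedness from the linearity of the certificate conditions and a Lipschitz perturbation bound (with constant $\max(1,\left\langle\mu,1\right\rangle)$), the forward implication by turning the $\epsilon$-certificates of Definition~\ref{def:solution} into a dual feasible sequence (shifting $w$ to $\epsilon$) and invoking Lemma~\ref{lem:duality}, and the reverse implication by using the strict slack $\left\langle\mu,1\right\rangle<T$ to force $w_k\to 0$ in the complementarity relations. The only cosmetic slip is the $\max(\lambda_1,\lambda_2)\epsilon$ in the cone argument, which should be $(\lambda_1+\lambda_2)\epsilon$, but as you note this is absorbed by the arbitrariness of $\epsilon$.
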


\begin{proof}
Convexity follows from convexity of the constraints of Definition \ref{def:solution}. There exists a constant $K$ such that for any pair $(l_0, v_0)$ that satisfies constraints of Definition \ref{def:solution} for a certain $\epsilon>0$, then it holds for any Lagrangian $l$ that
$\left\langle \mu, l + \mathrm{grad}\:v_0 \cdot f\right\rangle \leq \epsilon + K \|l - l_0\|_\infty$ and
$l + \mathrm{grad}\:v_0\cdot f \geq -\epsilon - K \|l - l_0\|_\infty$ on $X\times U$,
which is sufficient to prove closedness.
	
	For the first implication, suppose that $l \in \mathrm{IOCP}(\mu, \mu_T)$ and $\mathrm{div} f \mu + \mu_T = \mu_0$. Then for any $T > \left\langle \mu,1\right\rangle$, the pair $(\mu, \mu_T)$ is feasible for $\mathrm{OCP}(l, \mu_0, T)$. Lemma \ref{lem:duality} holds and the definition of $\mathrm{IOCP}(\mu, \mu_T)$ allows to construct a dual sequence that is feasible for $\mathrm{OCP}(l, \mu_0, T)$ and that satisfies the complementarity condition with the pair $(\mu, \mu_T)$.
		
	We now turn to the last implication. Suppose that $(\mu, \mu_T) \in \mathrm{OCP}(l, \mu_0, T)$ and $\left\langle \mu,1\right\rangle < T$. In particular, $(\mu, \mu_T)$ is feasible for $\mathrm{OCP}(l, \mu_0, T)$ and Lemma \ref{lem:duality} holds. Consider the dual maximizing sequence $(v_k,w_k)_{k\in\NN}$ given by Lemma \ref{lem:duality}. Complementarity ensures that $\lim_{k\to\infty}w_k=0$. Furthermore, it holds that 
$\lim_{k\to\infty} \left\langle \mu, l +\mathrm{grad}\:v_k\cdot f \right\rangle=0$,
$l + \mathrm{grad}\:v_k\cdot f {\geq} - w_k$ on $X\times U$,
$\lim_{k\to\infty}\left\langle \mu_T, v_k\right\rangle=0$ and
$v_k \leq 0$ on $X_T$ which shows that $l \in \mathrm{IOCP}(\mu, \mu_T)$.
\end{proof}
\begin{remark}
	\label{rem:aproxSolution}
	{\rm Another motivation behind Definition \ref{def:solution} of inverse optimality is the following. Suppose that $l \in \mathrm{IOCP}_\epsilon(\mu, \mu_T)$, then for any $T \geq \left\langle \mu,1\right\rangle$, $(\mu, \mu_T)$ is close to optimal for the problem $\mathrm{OCP}(l, \mu_0, T)$. Indeed, suppose that $(\tilde{\mu}, \tilde{\mu}_T) \in \mathrm{OCP}(l, \mu_0, T)$. 
Then there exists $v$ such that 
\begin{eqnarray*}
\left\langle \mu, l + \mathrm{grad}\:v\cdot f\right\rangle &=&\left\langle \mu, l\right\rangle + \left\langle \mu_T - \mu_0, v\right\rangle \leq \epsilon\\
\left\langle \tilde{\mu}, l +\mathrm{grad}\:v\cdot f\right\rangle &=& \left\langle \tilde{\mu}, l\right\rangle + \left\langle \tilde{\mu}_T - \mu_0, v\right\rangle\geq - T \epsilon,\end{eqnarray*}
and $-\left\langle \mu_T, v\right\rangle \leq \epsilon$ as well as 
$\left\langle\tilde{\mu}_T, v\right\rangle \leq \epsilon$.
In addition, $\left\langle \mu, l\right\rangle \leq \left\langle \mu_0, v\right\rangle + 2 \epsilon$ and $\left\langle \tilde{\mu}, l\right\rangle \geq \left\langle \mu_0, v\right\rangle - T \epsilon$. Therefore
$\left\langle \mu, l\right\rangle \geq \left\langle \tilde{\mu}, l\right\rangle \geq \left\langle \mu, l\right\rangle - (T + 2) \epsilon$.
}\end{remark}

\begin{remark}
{\rm At first sight the introduction of $T$ in Theorem \ref{th:solution} may look artificial whereas in fact it carries important information. The second part in the equivalence states that $(\mu, \mu_T)$ is a solution to some direct problem and does not saturate  one of the constraints. This allows to avoid direct problems for which, for any value of $T$, any solution would saturate the constraint on the mass of the occupation measure; for example this happens in direct problems with free terminal time tending to infinity. Such problems should be avoided since then an occupation measure with finite mass  cannot be optimal. Given a Lagrangian $l$, there is no guarantee that
there exists a triplet $(\mu, \mu_T, T)$ which satisfies the second point of Theorem \ref{th:solution}. However, checking that a Lagrangian $l$ meets our criterion for inverse optimality ensures that this is the case.}
\end{remark}

\add{An interesting corollary is that if the input of the optimal control is given by classical trajectories, then inverse optimality ensures that the value of (\ref{eq:pdirect0}) is attained by classical trajectories for almost all the initial values considered. This leaves {aside} many of the {technical issues when} working with classical trajectories for direct optimal control.} 

\add{\begin{corollary}\label{cor:noGap}
	{\rm If $l \in \mathrm{IOCP}(\mu, \mu_T)$ and $(\mu, \mu_0, \mu_T)$ is a superposition of classical trajectories as defined in equation \ref{eq:supperpos2} in Section \ref{sec:OM}, then $\mu_0$-a.a. (almost all) of these trajectories must be optimal for the corresponding direct problem. In particular, $v_0(z)$ given by (\ref{eq:pdirect0}) is attained and there is no relaxation gap between (\ref{eq:pdirect0}) with initial condition $z$ and (\ref{eq:pdirect}) with initial measure $\delta_z$ for $\mu_0$-a.a. initial conditions $z$ in $\spt\:\mu_0$. }
\end{corollary}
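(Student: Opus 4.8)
\textit{Proof sketch.} The plan is to feed the hypothesis $l\in\mathrm{IOCP}(\mu,\mu_T)$ into Theorem~\ref{th:solution} and Lemma~\ref{lem:duality} to produce a dual maximizing sequence satisfying the complementarity relations (\ref{eq:pdirectSlack2}), and then to disintegrate those relations along the individual trajectories $x_z$, showing that the cost of $x_z$ equals $v_0(z)$ (the value of (\ref{eq:pdirect0}) with $l_0:=l$) for $\mu_0$-almost every $z$. First I would record that, since $(\mu,\mu_0,\mu_T)$ is a superposition of classical trajectories as in (\ref{eq:supperpos2}), Liouville's equation $\mathrm{div}f\mu+\mu_T=\mu_0$ holds with $\mu_0\in\mathcal{M}_+(X_0)$ a probability measure, so the hypotheses of Theorem~\ref{th:solution} are met and there is $T>\langle\mu,1\rangle$ with $(\mu,\mu_T)\in\mathrm{OCP}(l,\mu_0,T)$. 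Lemma~\ref{lem:duality} (with $T$ in the role of $T_M$) then gives a feasible dual maximizing sequence $(v_k,w_k)_{k\in\NN}$ — so $l+w_k+\mathrm{grad}\,v_k\cdot f\in\mathcal{C}_+(X\times U)$, $-v_k\in\mathcal{C}_+(X_T)$, $w_k\ge 0$ — satisfying (\ref{eq:pdirectSlack2}); since $\langle\mu,1\rangle<T$, the first relation forces $w_k\to 0$.

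Next I would disintegrate along trajectories. Writing $J(z):=\int_0^{T_z}l(x_z(t),u_z(t))\,dt$ for the cost of the fixed trajectory issued from $z$, the fundamental theorem of calculus (as in the computation displayed inside (\ref{eq:liouville1})) shows that, for each $z\in\spt\mu_0$, the quantity $a_k(z):=\int_0^{T_z}(l+w_k+\mathrm{grad}\,v_k\cdot f)(x_z(t),u_z(t))\,dt$ equals $J(z)+w_kT_z+v_k(x_z(T_z))-v_k(z)$; set also $b_k(z):=-v_k(x_z(T_z))$. Dual feasibility makes $a_k\ge 0$ and $b_k\ge 0$ pointwise on $\spt\mu_0$, and by (\ref{eq:supperpos2}) the relations (\ref{eq:pdirectSlack2}) read $\int_{X_0}a_k\,d\mu_0=\langle\mu,l+w_k+\mathrm{grad}\,v_k\cdot f\rangle\to 0$ and $\int_{X_0}b_k\,d\mu_0=-\langle\mu_T,v_k\rangle\to 0$. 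Rearranging the identity for $a_k$ yields $v_k(z)=J(z)+w_kT_z-a_k(z)-b_k(z)$, hence $|v_k(z)-J(z)|\le w_kT_M+a_k(z)+b_k(z)$, so $v_k\to J$ in $L^1(\mu_0)$ and, along a subsequence $(k_j)$, $v_{k_j}(z)\to J(z)$ for $\mu_0$-a.a. $z$.

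To conclude I would invoke the weak HJB lower bound: for any dual-feasible pair $(v,w)$ (in particular $(v_{k_j},w_{k_j})$, whose feasibility does not involve the bound) and any feasible trajectory of (\ref{eq:pdirect0}) issued from $z$ with terminal time $\tilde T\le T_M$, integrating $l+w+\mathrm{grad}\,v\cdot f\ge 0$ along it and using $w\ge 0$, $-v\ge 0$ on $X_T$ gives cost $\ge v(z)-w\tilde T\ge v(z)-wT_M$; taking the infimum over trajectories, $v_0(z)\ge v(z)-wT_M$, hence $v_0(z)\ge d^*(\delta_z)\ge v_{k_j}(z)-w_{k_j}T_M\to J(z)$ for $\mu_0$-a.a. $z$. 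Since the fixed trajectory is feasible for (\ref{eq:pdirect0}) one has $v_0(z)\le J(z)$, and its occupation measure is feasible for (\ref{eq:pdirect}) with initial measure $\delta_z$ so $p^*(\delta_z)\le v_0(z)$; combining, $J(z)\le d^*(\delta_z)\le v_0(z)\le J(z)$ for $\mu_0$-a.a. $z$. Thus $v_0(z)=J(z)$ — so the fixed trajectory is optimal for (\ref{eq:pdirect0}) and the infimum there is attained — and, by the strong duality of Lemma~\ref{lem:duality} applied with $\mu_0=\delta_z$, $p^*(\delta_z)=d^*(\delta_z)=v_0(z)$, i.e.\ there is no relaxation gap.

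The main difficulty is expected to be bookkeeping rather than a single hard idea: one must keep the bounds $T$ (from Theorem~\ref{th:solution}) and $T_M$ (appearing in $v_0$, $p^*(\delta_z)$, $d^*(\delta_z)$) straight — harmless since dual feasibility ignores the bound and $w_k\to 0$ — and one must ensure that the disintegration of the complementarity integrals over the family $\{x_z\}$ is legitimate (measurability in $z$), which is exactly what the construction (\ref{eq:supperpos2}) supplies. The only genuinely non-formal step is the passage to a subsequence for the $\mu_0$-a.e.\ convergence $v_{k_j}\to J$, which suffices here because only the one-sided estimate $d^*(\delta_z)\ge J(z)$ is needed.
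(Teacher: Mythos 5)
Your argument is correct and complete; the paper states Corollary \ref{cor:noGap} without an explicit proof, presenting it as an immediate consequence of Theorem \ref{th:solution}, and your derivation --- extracting the dual maximizing sequence $(v_k,w_k)$ from Lemma \ref{lem:duality}, disintegrating the complementarity relations along the individual trajectories $x_z$ to obtain $v_{k_j}(z)\to J(z)$ for $\mu_0$-a.a.\ $z$, and then closing the loop with the pointwise HJB lower bound $v_0(z)\ge v_{k_j}(z)-w_{k_j}T_M$ --- is exactly the intended route. The two points worth making explicit in a written version are the measurability in $z$ of $a_k$, $b_k$ and $J$ (which, as you note, is built into the construction (\ref{eq:supperpos2})) and the fact that dual feasibility in (\ref{eq:pdirectDual}) does not involve the time bound, so the $T$ supplied by Theorem \ref{th:solution} and the $T_M$ appearing in $v_0$, $p^*(\delta_z)$ and $d^*(\delta_z)$ can be interchanged once $w_k\to 0$ --- both of which you already handle.
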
}

\add{As a consequence, the focus on (\ref{eq:pdirect}) instead of (\ref{eq:pdirect0}) in Section \ref{sec:OCP} is \textit{a posteriori} justified by {Corollary \ref{cor:noGap}.} The question of absence of such a gap for initial conditions $z \not \in \spt\; \mu_0$ cannot be treated by this approach. This question is much less relevant for inverse optimality since it does not involve initial conditions that are related to input data of the inverse problem.}
\subsection{Applications to inverse optimality}
We claim that Definition \ref{def:solution} is a powerful tool to analyze inverse optimality in the context of optimal control. To go beyond Theorem \ref{th:solution}, we next describe results and comments that stem from Definition \ref{def:solution} of inverse optimality.

\subsubsection{How big is the space of solutions to the inverse problem?}
\label{sec:searchSpace}
Theorem \ref{th:solution} justifies the idea that if trajectories realize the minimum of some optimal control process then the corresponding Lagrangian meets our criterion. This requirement is necessary for any ``inverse problem" (and not only for
inverse optimal control). However in general there could be many candidate solutions as illustrated in this section. In what follows, we assume that the triplet $(\mu, \mu_0, \mu_T)$ satisfies Liouville's equation (\ref{eq:liouville2}).

\paragraph{Conserved values.} Suppose that there exists a function $g \in \mathcal{C}(X \times U)$ such that $g(x, u) = 0$ for all $(u, x) \in \spt\:\mu$. Then $g^2 \in \mathrm{IOCP}(\mu, \mu_T)$. In practical examples there might be many such conserved values. For instance this is the case when $x$ or $u$ or both remain on a manifold or when there exists a continuous mapping $x \to u(x)$. 

\paragraph{Total variations.} Consider any function $g \in \mathcal{C}^1(X)$. All Lagrangians of the form $l = \mathrm{grad}\:g\cdot f$ belong to $\mathrm{IOCP}(\mu, \mu_T)$, independently of $(\mu, \mu_0, \mu_T)$.

\paragraph{Convex conic combinations and uniform limits of solutions.} As stated in Theorem \ref{th:solution}, the set of solutions to the inverse problem is a convex cone, closed for the supremum norm. For example, let $g \in \mathcal{C}^1(X )$ and consider
a Lagrangian $l \in \mathcal{C}(X \times U)$. Then
$\mathrm{OCP}(l, \mu_0, T) = \mathrm{OCP}(l +\mathrm{grad}\:g\cdot f, \mu_0, T)$ for every $T>0$
and therefore both Lagrangians $l$ and $l + \mathrm{grad}\:g\cdot f$ are solutions to the inverse optimal control problem.

All the above examples illustrate that many solutions to the inverse problem may exist. Although these solutions are valid from a theoretical point of view, they do not correspond to what is commonly expected from a solution. Indeed, they do not arise from an optimal physical process that would have generated trajectories, but rather from mathematical artifacts. 

\subsubsection{How does the direct problem affect the space of solutions to the inverse problem?}
Intuitively, the more information is contained in $(\mu, \mu_T, \mu_0)$, the smaller is the space of solutions to the inverse problem. We next discuss two factors that impact the size of $\mathrm{IOCP}(\mu, \mu_T)$.

\paragraph{Direct problem constraints.} Denote by $\mathcal{K}^1$ (resp. $\mathcal{K}^2$) the feasible set of problem (\ref{eq:pdirect}) and by $\mathrm{IOCP}^1(\mu, \mu_T)$ (resp. $\mathrm{IOCP}^2(\mu, \mu_T)$) the set of solutions to the inverse problem 
(as described in Definition \ref{def:solution})  when the state, control and dynamical constraints are given by $(X^1, U^1, f^1)$ (resp. $(X^2, U^2, f^2)$). 

If $\mathcal{K}^1 \subset \mathcal{K}^2$ then $\mathrm{IOCP}^2(\mu, \mu_T) \subset \mathrm{IOCP}^1(\mu, \mu_T)$. In other words, there is  a kind of duality between the space of feasible solutions for the direct problem and the space of solutions to the inverse problem. An extreme instance is when the feasible space of the direct problem is a singleton ($f$ does not depend on the control $u$), in which case any Lagrangian is a solution to the inverse problem.

\paragraph{Range of the occupation measure.} Suppose that $(\mu, \mu_0, \mu_T) = (\mu^1, \mu^1_0, \mu^1_T) + (\mu^2, \mu^2_0, \mu^2_T)$ where $\mathrm{div}f \mu^1 + \mu^1_T = \mu^1_0$ and $\mathrm{div}f \mu^2 + \mu^2_T = \mu^2_0$. 
Then $\mathrm{IOCP}(\mu, \mu_{T})\subset \mathrm{IOCP}(\mu^1, \mu^1_T)$. As a consequence, maximizing the support of the initial measure $\mu_0$ reduces the space of solutions to the inverse problem. When the occupation measure $\mu$ is a superposition of trajectories as detailed in Section \ref{sec:OM}, the larger is the ``space" occupied by trajectories, the smaller is the space of potential solutions to the inverse problem. 

\subsubsection{A toy example of quantitative well-definedness analysis}
\label{sec:toyExample}
To illustrate the proposed framework we consider a simple uni-dimensional example. We emphasize that his example is very simple in the sense that the direct problem is easy. However, inspecting the solution of the inverse problem leads to non trivial behaviors.
Let $X  = [-1, 1]$  with $X_T = \{0\}$ and let $f(x,u) = u$ with $U = [-1, 1]$. Consider the family of Lagrangians $\mathcal{F} = \{l_\alpha \colon u\mapsto 1 + \frac{\alpha}{2}u^2,\, \alpha \geq 0\}$. Suppose that we are given a triplet $(\mu, \mu_0, \mu_T)$ which consists of a superposition of trajectories as described in Section \ref{sec:OM}. We wish to find a candidate Lagrangian in the family $\mathcal{F}$. Then we have the following alternatives.
\begin{itemize}
	\item[1] $\mathcal{F} \cap \mathrm{IOCP}(\mu,\mu_T) = \emptyset$.
	\item[2] $\mathcal{F} \cap \mathrm{IOCP}(\mu,\mu_T) = \mathcal{F}$.
	\item[3] $\mathcal{F} \cap \mathrm{IOCP}(\mu,\mu_T)$ is a singleton, $\{l_\alpha\}$, $\alpha  > 2$.
	\item[4] $\mathcal{F} \cap \mathrm{IOCP}(\mu,\mu_T) = \{l_\alpha,\, 0 \leq \alpha \leq 2\}$ .
\end{itemize}
We should comment on case 1 latter. If the support of $\mu$ is empty, which means that $\mu_0 = \mu_T = \delta_x$, then we are in case 2. Assume now that the support of $\mu$ is non-empty and we are not in case 1. Then there exists $\alpha \geq 0$ such that $l_\alpha \in \mathrm{IOCP}(\mu,\mu_T)$. Consider a sequence of decreasing positive numbers $\epsilon_k \to 0$ and the corresponding certificates functions $v_k$ that allow to verify that $l_\alpha \in \mathrm{IOCP}_{\epsilon_k}(\mu, \mu_T)$. Since $\mu_T = \delta_0$, we may assume (up to an addition) that $v_k(0) = 0$ which simplifies the problem. In addition, one must have $1 + \frac{\alpha}{2} u^2 + v_k'(x) u \to 0$, $\mu$ almost every where (recall that the support of $\mu$ is non empty and this concerns a non empty subset of $X$ and $U$). Furthermore, for any $x$, one must have $1 + \frac{\alpha}{2} u^2 + v_k'(x) u + \epsilon_k = \left( \sqrt{\frac{\alpha}{2}} u + \frac{v'(x)}{\sqrt{2\alpha}} \right)^2 + 1 + \epsilon_k - \frac{v'(x)^2}{2 \alpha}\geq 0$, for $u \in [-1, 1]$. 
\begin{itemize}
	\item Suppose that $\alpha > 2$. Then for any $k$, $|v'_k(x)| \leq 1 + \frac{\alpha}{2} + \epsilon_k$. Since $\epsilon_k$ goes to $0$ and $\alpha > 2$, for $k$ sufficiently large, has $\frac{|v'_k(x)|}{\alpha} \leq 1$. Taking $u = -\frac{v'_k(x)}{\alpha}$ gives $1 + \frac{\alpha}{2} u^2 + v_k'(x) u + \epsilon_k \geq 1 + \epsilon_k - \frac{v'(x)^2}{2 \alpha}\geq 0$. It must hold $\mu$ almost everywhere that $ 1 + \epsilon_k - \frac{v'(x)^2}{2 \alpha} \to 0$ and $|u| = \sqrt{\frac{2}{\alpha}}$.
	\item Suppose that $0 \leq\alpha\leq 2$. It must hold $\mu$ almost every where that $v'_k(x)u \to - 1 - \frac{\alpha}{2}u^2$. This implies that $u \neq 0$ and $|v'(x)| \to \frac{1+\frac{\alpha}{2}u^2}{|u|}$, $\mu$ almost every where. It can be verified that for $\alpha \leq 2$, this is a strictly decreasing function of $|u|$ for $|u| \leq 1$. Therefore, it holds that $\lim\inf|v'_k(x)| \geq 1 + \frac{\alpha}{2}$, $\mu$ almost every where. Since we have $|v'_k(x)| \leq 1 + \frac{\alpha}{2} + \epsilon_k$, it holds that $|v'_k(x)| \to 1 + \frac{\alpha}{2}$ and therefore $|u| = 1$, $\mu$ almost everywhere. In this case, it is easy to construct alternative sequences $\tilde{v}_k = \frac{2 + \tilde{\alpha}}{2 + \tilde{\alpha}} v_k$ for $0 \leq \tilde{\alpha} \leq 2$ to show that $l_{\tilde{\alpha}}$ is also a member of $\mathrm{IOCP}(\mu, \mu_T)$. 
\end{itemize}
To conclude, we are in case $1$ when the trajectories that generate $\mu$ are not optimal with respect to any Lagrangian in $\mathcal{F}$, in particular when $|u|$ is not $\mu$ almost everywhere constant. If this is not the case and $\mu$ is not degenerate, we have a unique solution or a set of solutions depending on $\mu$ and its relation with the constraint on $u$.

\section{Practical inverse control}
\label{sec:practical}
As discussed in Section \ref{sec:searchSpace}, the space of solutions to the inverse problem can be very large. Many of these solutions are of little interest for practitioners because they lack some physical meaning. However, from a formal point of view 
``valid" solutions exist and ideally they should be the only solutions of
a {\it practical inverse optimal control problem} to be defined. 

One may invoke some heuristics to reduce the space of solutions and to enforce prior knowledge in the treatment of the inverse problem. This is commonly achieved by imposing constraints on the candidate Lagrangian solution. Such heuristics include :
\begin{itemize}
	\item restricting the dependence  on certain variables;
	\item shape conditions (e.g., convexity);
	\item conic constraints such as positivity;
	\item parametric constraints (e.g., considering a finite dimensional family of candidate Lagrangians);
	\item constraints relating the dependence between the candidate Lagrangian and the corresponding value function.
\end{itemize}

Notice that Definition \ref{def:solution} refers to the large class  of continuous Lagrangians with conic constraints. From a theoretical perspective, this allows to characterize inverse optimality in full generality. However this is not amenable to numerical computation yet and so we also describe tractable numerical approximations in the context of inverse optimality.

Finally, according to Definition \ref{def:solution}, the input of the inverse problem is an occupation measure. Again, this is a convenient tool for theoretical purposes but in most practical cases such an occupation measure is not available.
In fact, roughly speaking, only some realizations of an experiment are available and
these realizations form a data set which is an approximation of an hypothetical occupation measure. Therefore in practice
the input of the inverse problem is only an approximation of an ideal input,
and correctness of this approximation is justified under certain experimental assumptions at the end of this section.

\subsection{Normalization}
The trivial Lagrangian is solution to the inverse problem independently of the input occupation measures. As we have seen in Section \ref{sec:searchSpace}, total variations share the same property. Even though these are solutions to the inverse problem, it is important to avoid them in practice because they do not depend on the input occupation measure and therefore carry no information about it. As illustrated in Example \ref{sec:toyExample}, one way to avoid these spurious solutions is to consider only very restricted families of Lagrangians that cannot contain such solutions. This might be quite restrictive in practice and therefore we provide an alternative.
We need the following assumption:

\begin{assumption}[Finite time controllability]
	\label{as:controlability}
	There exists $T > 0$ and a compact set $\tilde{X} \subset X$ with smooth boundary and $\mu_0(\tilde{X}) > 0$, such that for any $x_1, x_2 \in \tilde{X}$, there exists $s \in [0, T]$, a bounded function $u : [0,s] \to U$ and an absolutely continuous trajectory $x : [0,s] \to X$ such that $x(0) = x_1$, $x(s) =x_2$ and $\dot{x}(t) = f(x(t), u(t))$ for all $t \in [0, s]$.
\end{assumption}

 Under assumption \ref{as:controlability} we have the following result.
\begin{proposition}
	\label{prop:normalization}
	If  in Definition \ref{def:solution} one includes the normalization
	\[
		\left|1 - \int_{\tilde{X} \times U}l + \mathrm{grad}\:v\cdot f\right| = \epsilon
	\]
	then $0 \not\in \mathrm{IOCP}(\mu, \mu_T)$.
\end{proposition}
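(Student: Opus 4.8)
The plan is to show that $0\notin\mathrm{IOCP}_{\epsilon_0}(\mu,\mu_T)$ for some (indeed every sufficiently small) $\epsilon_0>0$; since $\mathrm{IOCP}(\mu,\mu_T)=\bigcap_{\epsilon>0}\mathrm{IOCP}_\epsilon(\mu,\mu_T)$, this gives $0\notin\mathrm{IOCP}(\mu,\mu_T)$. I would argue by contradiction: fix $\epsilon>0$ and suppose some $v\in\mathcal{C}^1(X)$ witnesses $0\in\mathrm{IOCP}_\epsilon(\mu,\mu_T)$ for the normalized version of Definition~\ref{def:solution} (so with $l=0$). Only two of the constraints are needed: the relaxed HJB inequality $\mathrm{grad}\,v\cdot f\ge-\epsilon$ on $X\times U$, and the normalization $\bigl|1-\int_{\tilde X\times U}\mathrm{grad}\,v\cdot f\,dx\,du\bigr|=\epsilon$, which forces $\int_{\tilde X\times U}\mathrm{grad}\,v\cdot f\,dx\,du\ge 1-\epsilon$. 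The goal is then to prove that the left-hand side is $O(\epsilon)$ uniformly over all admissible $v$, which contradicts the lower bound $1-\epsilon$ once $\epsilon$ is small.

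First I would use Assumption~\ref{as:controlability} to show that $v$ is almost constant on $\tilde X$. Given $x_1,x_2\in\tilde X$, pick an admissible trajectory $x(\cdot)$ on $[0,s]$ with $s\le T$ joining $x_1$ to $x_2$, with control $u(\cdot)$ valued in $U$ and $x(t)\in X$. Since $v\circ x$ is absolutely continuous with $(v\circ x)'(t)=\mathrm{grad}\,v(x(t))\cdot f(x(t),u(t))\ge-\epsilon$ a.e., integration yields $v(x_2)-v(x_1)\ge-s\epsilon\ge-T\epsilon$; exchanging $x_1$ and $x_2$ gives $|v(x_1)-v(x_2)|\le T\epsilon$. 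Fixing $x_0\in\tilde X$ and setting $\tilde v:=v-v(x_0)$, we get $|\tilde v|\le T\epsilon$ on $\tilde X$ while $\mathrm{grad}\,\tilde v=\mathrm{grad}\,v$, so $\tilde v$ may replace $v$ inside the normalization integral (the constant shift is harmless there, though not for the other constraints of Definition~\ref{def:solution}, so it is used only inside the estimate).

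Next I would reduce the integral by integration by parts. With $F(x):=\int_U f(x,u)\,du$, differentiation under the integral sign (using $f\in\mathcal{C}^1$ and $U$ compact) gives $F\in\mathcal{C}^1$ with $\mathrm{div}\,F$ bounded on the compact set $\tilde X$; Fubini gives $\int_{\tilde X\times U}\mathrm{grad}\,v\cdot f\,dx\,du=\int_{\tilde X}\mathrm{grad}\,\tilde v\cdot F\,dx$; and, since $\tilde X$ has smooth boundary, the divergence theorem applied to $\tilde v F$ gives
\[
\int_{\tilde X}\mathrm{grad}\,\tilde v\cdot F\,dx=\int_{\partial\tilde X}\tilde v\,(F\cdot n)\,d\sigma-\int_{\tilde X}\tilde v\,\mathrm{div}\,F\,dx,
\]
with $n$ the outward unit normal and $\sigma$ the surface measure. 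Using $|\tilde v|\le T\epsilon$ and the boundedness of $F$ and $\mathrm{div}\,F$ together with finiteness of $\sigma(\partial\tilde X)$ and of the Lebesgue measure of $\tilde X$, this gives $\bigl|\int_{\tilde X\times U}\mathrm{grad}\,v\cdot f\,dx\,du\bigr|\le C\epsilon$ for a constant $C$ depending only on $T$, $\tilde X$, $U$ and $f$. Combining with $\int_{\tilde X\times U}\mathrm{grad}\,v\cdot f\,dx\,du\ge 1-\epsilon$ yields $1-\epsilon\le C\epsilon$, i.e.\ $\epsilon\ge(1+C)^{-1}$. Thus no admissible $v$ exists once $\epsilon<(1+C)^{-1}$, so $0\notin\mathrm{IOCP}_\epsilon(\mu,\mu_T)$ for such $\epsilon$, whence $0\notin\mathrm{IOCP}(\mu,\mu_T)$.

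I expect the main (though modest) obstacle to be making the integration-by-parts estimate fully rigorous: justifying differentiation under the integral for $F$, invoking the divergence theorem on $\tilde X$ — which is precisely why Assumption~\ref{as:controlability} demands a smooth boundary — and checking that subtracting the constant $v(x_0)$ is legitimate because the normalization functional depends on $v$ only through $\mathrm{grad}\,v$. The conceptual heart, by contrast, is the simple observation that the HJB-type positivity constraint propagated along controllable trajectories pins $v$ to within $T\epsilon$ of a constant on $\tilde X$, which forces the normalized quantity to vanish with $\epsilon$.
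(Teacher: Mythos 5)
Your proof is correct and rests on the same two pillars as the paper's: the controllability assumption propagated along the relaxed HJB inequality pins the oscillation of $v$ on $\tilde X$ to at most $T\epsilon$, and the divergence theorem applied to $v\,F$ with $F(x)=\int_U f(x,u)\,du$ then forces the normalization integral to be $O(\epsilon)$, contradicting the lower bound $1-\epsilon$. The one genuine difference is your constant-shift trick $\tilde v:=v-v(x_0)$, which exploits the fact that the normalization functional depends on $v$ only through $\mathrm{grad}\,v$. The paper instead applies Stokes to $v_k$ itself, and therefore must first prove that $v_k\to 0$ uniformly on $\tilde X$ (not merely that it is nearly constant); this is where the paper invokes Assumption \ref{ass:feasibility}, the identity $\left\langle \mu_0,v_k\right\rangle=O(\epsilon_k)$, and the hypothesis $\mu_0(\tilde X)>0$ from Assumption \ref{as:controlability}. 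Your route dispenses with all of that: the feasibility assumption and the positivity of $\mu_0(\tilde X)$ are never used, so your argument is both shorter and establishes the proposition under strictly weaker hypotheses. A further small gain is quantitative: you obtain an explicit threshold $\epsilon<(1+C)^{-1}$ below which $0\notin\mathrm{IOCP}_\epsilon(\mu,\mu_T)$, rather than only the asymptotic contradiction along a sequence $\epsilon_k\to 0$.
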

\begin{proof}
	This is due to the following contradiction. Suppose that $0 \in \mathrm{IOCP}(\mu, \mu_T)$.
	Choose a decreasing sequence $\epsilon_k\to 0$ as $k\to\infty$, and construct a sequence $v_k$ of differentiable functions that satisfy conditions of Definition \ref{def:solution} for the chosen $\epsilon_k$.  Then
$\left\langle\mu_0,v_k \right\rangle = \left\langle\mu_T,v_k \right\rangle - \left\langle \mu, \mathrm{grad}\:v_k\cdot f\right\rangle = O(\epsilon_k) $.
Because of Assumption \ref{ass:feasibility}, $v_k(x_0) \leq T_M\epsilon_k$ for every $x_0 \in X_0$.
Therefore, by integration, $\max(\int_{\tilde{X}} v_k \mu_0, \int_{X \setminus \tilde{X}} v_k \mu_0) \leq O(\epsilon_k)$. Furthermore, $\int_{\tilde{X}} v_k \mu_0+ \int_{X \setminus \tilde{X}} v_k \mu_0  = \left\langle\mu_0, v_k \right\rangle = O(\epsilon_k)$. Finally, $$\max\left(\int_{\tilde{X}} v_k \mu_0, -\int_{\tilde{X}} v_k \mu_0\right) \leq O(\epsilon_k)$$ and $\lim_{k\to\infty}\int_{\tilde{X}} v_k=0$.
	
	In addition, by Assumption \ref{as:controlability}, we also have 
	$|v_k(x_1) - v_k(x_2)| \leq T \epsilon_k$ for every $x_1, x_2 \in \tilde{X}$.
	Since $\mu_0(\tilde{X}) > 0$, this implies that $v_k\to 0$ uniformly on $\tilde{X}$, as $k\to\infty$. In addition,
	$\int_{\tilde{X} \times U}\mathrm{grad}\:v_k\cdot f \to 1$ as $k\to\infty$.
	Next, with the polynomial vector field 
	\[x\mapsto g(x) = \int_U f(x,u) du,\]
	Stokes' Theorem yields
	\begin{align*}
		\int_{\tilde{X}} \text{div}(v_k(x)g(x)) &= \int_{\tilde{X}} \text{div}(g(x)) v_k(x) + \mathrm{grad}\:v_k(x)\cdot g(x)\\
		&=  \int_{\tilde{X}} \text{div}(g(x)) v_k(x) + \int_{\tilde{X} \times U} \mathrm{grad}\:v_k(x)\cdot f(x,u)\\
		&= \int_{\partial \tilde{X}} v_k(x) g(x) \cdot \vec{n}(x)
	\end{align*}
          where $\vec{n}(x)$ is the outward pointing normal to the boundary at $x$.
	Because of the uniform convergence of $v_k\to 0$ on $\tilde{X}$ as $k\to\infty$, and boundedness of $g$ and $\mathrm{div}\:g$ on $X$, the left-hand side converges to $1$ while the right-hand side converges to $0$, which is a contradiction.
\end{proof}


\begin{remark}
{\rm 	The conditions on $\tilde{X}$ may be relaxed. Indeed, the only important point is to be able to apply Stokes's Theorem.
	In particular, the set $\tilde{X}$ could  be a box or an open set whose boundary does not have too many non-smooth points, see e.g. \cite[Theorem III.14A]{whitney1957geometric}.
}\end{remark}
\begin{remark}
{\rm 	The normalization given in Proposition \ref{prop:normalization} obviously ensures that 
	 Lagrangians in the form of a total variation are excluded.
}\end{remark}
Assumption \ref{as:controlability} may look very strong regarding the result of Proposition \ref{prop:normalization}. However
the next example shows that it cannot be excluded.

\begin{example}
	Consider the direct control problem with $X = U = [0, 1]$, $X_T = \{1\}$, $T_M = 1$ and $f(x, u) = u$. These data are obviously not compatible with Assumption \ref{as:controlability}. Choose $\mu_0(dx) = dx$ and $l_0(x,u) = 1$  so that the couple $\mu(dx,du) := x dx \delta_1(du)$ and $\mu_T(dx) := \delta_1(dx)$, solves the problem
	\begin{equation}\label{eq:exDirect}
	\begin{array}{ll@{\;}l@{}}
		p^*(\mu_0) :=& \displaystyle\inf_{\mu, \mu_T} & \left\langle \mu, 1\right\rangle \\
		& \mathrm{s.t.} & \mathrm{div}f \mu + \mu_T = \mu_0,\\
	           &&\left\langle \mu,1 \right\rangle \leq  T_M,\\
						 &&\mu \in \mathcal{M}_+(X \times U),\\
						 &&\mu_T \in \mathcal{M}_+(X_T).
	\end{array}
	\end{equation}
	Indeed, for any differentiable $v$, 
	$\left\langle \mathrm{div} f \mu, v \right\rangle = \int_T^1 - v' f \mu(dx) = \int_T^1 v dx - v(1) = \left\langle \mu_0, v \right\rangle - \left\langle \mu_T, v \right\rangle.$
	Furthermore, the function $x\mapsto v^*(x): = 1 - x$ ensures that $(\mu,\mu_T)$ is an optimal solution. Indeed 
	$\left\langle \mu, 1 \right\rangle =  \left\langle \mu_0, v^* \right\rangle.$
	Consider a sequence of differentiable functions $(v_k)$, $k\in\mathbb{N}$, such that $v_k(x) = 0$ for $x \geq \frac{1}{k}$ and $v_k(x) = -(kx -1)^2$ otherwise. For $\tilde{X} = [0, 1]$, we have
$0 \leq \left\langle \mu, v_k' f \right\rangle = \int_T^\frac{1}{k} kx - k^2x^2dx = \frac{1}{6k} \to 0$,
$v_k'f \geq 0$, $\left\langle \mu_T, v_k \right\rangle = 0$,
$v_k(1) = 0$, $\int_{\tilde{X}\times U} v'_k f = v_k(1) - v_k(0) = 1$.
Therefore even if we enforce the normalization $\int_{\tilde{X}\times U} l + v'f = 1$, we cannot prevent the trivial Lagrangian $l = 0$ from
being an optimal solution to the inverse problem.
\end{example}

\begin{remark}
{\rm 	Regarding Proposition \ref{prop:normalization}, one could argue that simple linear constraints such as $l(0) =1$ or conic constraints such as $l > 0$ would be sufficient to avoid the trivial Lagrangian. However, this does not allow to avoid total variations which are equivalent to the trivial Lagrangian in terms of solutions to the direct problem.
}\end{remark}

Denote by $\mathrm{NIOCP}$ the subset of $\mathrm{IOCP}$ with the normalization constraint of Proposition \ref{prop:normalization} added to the constraints of Definition \ref{def:solution}. One important feature of this normalization is that it can be thought of as a way to intersect the cone of solutions to the inverse problem with an affine subspace. Therefore $\mathrm{NIOCP}$ is still closed and convex. Furthermore, if we restrict the set of candidate Lagrangians to be finite-dimensional, one may look for minimum norm-like solutions which will prove to be useful in numerical experiments. Indeed, $\mathrm{NIOCP}$ being closed and convex and all norms being equivalent in finite dimensions, optimization problems over $\mathrm{NIOCP}$, if bounded, have an optimal solution. Finally,
as $0 \not\in \mathrm{NIOCP}$, one may minimize any norm-like function to enforce specific prior structure and avoid the trivial Lagrangian.

\subsection{Polynomial approximation}
Until now, all the results that we have presented involve continuous and differentiable functions, in full generality. 
However, for practical computation one has to approximate such functions
and of course, {\it polynomials} are obvious natural candidates. But in our context they are also of particular interest for mainly three reasons:
\begin{itemize}
	\item for fixed degree, polynomials belong to finite-dimensional spaces and are therefore amenable to computation;
	\item when varying the degree, the class of polynomials is rich enough to approximate a wide class of functions;
	\item {\it Positivity Certificates} from real algebraic geometry allow to express positivity constraints in a computationally tractable way.
\end{itemize}
From now on, we make the following assumption:
\begin{assumption}
	\label{ass:poly}
	$f$ is a polynomial and $X$, $X_T$ and $U$ are basic semi-algebraic sets.
\end{assumption}
As proposed in Definition \ref{def:solution}, checking that a polynomial is a solution of the inverse problem involves the construction of a sequence of continuously differentiable functions. These functions can also be approximated by polynomials. In this section
we describe some tools required for such an approximation and we also prove the correctness of the approximations.

Let $g_1, \ldots , g_m \in \RR[z]$ be polynomials in the variable $z\in\RR^{n}$ and consider the basic semi-algebraic set
\[
G = \{z \in \RR^{n} : g_i(z) \geq 0,\, i=1,\ldots,m\}.
\]
Let $g_0=1$ and let $\Sigma^2\subset\mathbb{R}[z]$ denotes the set of sums-of-squares (SOS) polynomials,
i.e., $p\in\Sigma^2$ if it can be written as a sum of squares of other polynomials.
\begin{definition}
Let $Q_k(G)$ denote the convex cone of polynomials that can be written as
\[
		p = \sum_{i=0}^m s_i g_i, \quad s_i \in \Sigma^2,\, i = 0,1,\ldots,m,
\]
where the degree of $s_i g_i$, $i = 0,1, \ldots m$, is at most $2k$.
If $p \in Q_k(G)$ we say that $p$ has a Putinar positivity certificate.
\end{definition}
It is immediate to check that any element of $Q_k(G)$ is non-negative on $G$. A remarkable property of such certificates is that 
a partial converse is true. 

\begin{proposition}[\cite{putinar1993positive}]
	\label{prop:putinar}
	Suppose that the polynomial super-level set $\{x : g_i(x) \geq 0\}$ is compact for some $i=1,\ldots,m$.
 If $p>0$ on $G$ then  there exists $k \geq 0$ such that $p \in Q_k(G)$. 
\end{proposition}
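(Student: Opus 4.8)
This is Putinar's Positivstellensatz, so in the body of the paper it is simply invoked as a black box (whence the citation \cite{putinar1993positive}); the classical route I would take is through the solution of the moment problem. First I would reduce the statement to a purely algebraic normalization. Writing $Q(G):=\bigcup_{k\geq 0}Q_k(G)$, the conclusion ``$p\in Q_k(G)$ for some $k$'' is literally ``$p\in Q(G)$'', so it suffices to show $p\in Q(G)$. The compactness hypothesis enters exactly once, to guarantee that the quadratic module $Q(G)$ is \emph{Archimedean}, i.e.\ $N-\|z\|^2\in Q(G)$ for some $N$ (in the setting of the paper this is transparent, since a ball constraint on the compact sets $X,U$ may be taken among the $g_j$; in general it follows from a short argument in the single variable $t=\|z\|^2$). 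I would also record that $Q(G)$ is stable under multiplication by squares, since $a^2\sum_i s_ig_i=\sum_i(a^2s_i)g_i$ with $a^2s_i\in\Sigma^2$.

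Next I argue by contradiction: suppose $p\notin Q(G)$. The Archimedean property makes $1$ an order unit of $Q(G)$, so in the associated order-unit seminorm on $\RR[z]$ the convex cone $Q(G)$ has nonempty interior; since $p$ cannot lie in that interior (otherwise $p\in Q(G)$ already), the Hahn--Banach separation theorem produces a nonzero linear functional $L\colon\RR[z]\to\RR$ with $L\geq 0$ on $Q(G)$ and $L(p)\leq 0$ (using that $Q(G)$ is a cone to push the separating inequality into exactly this form). Because $\Sigma^2\subseteq Q(G)$, the bilinear form $(a,b)\mapsto L(ab)$ is positive semidefinite, so Cauchy--Schwarz gives $L(a)^2\leq L(a^2)L(1)$; hence $L(1)=0$ would force $L\equiv 0$, which is impossible, and I normalize $L(1)=1$.

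Now I would represent $L$ by a measure carried by $G$. From $(N-\|z\|^2)a^2\in Q(G)$ one gets $L(\|z\|^2a^2)\leq N\,L(a^2)$; letting $a$ run over monomials and iterating yields $L(z^{2\alpha})\leq N^{|\alpha|}$, and then $|L(z^\alpha)|\leq N^{|\alpha|/2}$ for every multi-index $\alpha$ by Cauchy--Schwarz. A positive functional whose moments grow at most at this rate is the moment functional of a compactly supported positive Borel measure $\mu$ — by Haviland's theorem together with a Carleman/Hausdorff-type bound, or, equivalently, via the joint spectral measure of the bounded commuting self-adjoint multiplication operators $z_1,\dots,z_n$ on the Hilbert space built from the form $L(ab)$. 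Moreover $0\leq L(g_i\sigma^2)=\int g_i\sigma^2\,d\mu$ for every $i$ and every SOS $\sigma$, and letting $\sigma^2$ approximate indicators of the regions where $g_i<0$ forces $g_i\geq 0$ $\mu$-a.e., so $\spt\mu\subseteq\{z:g_i(z)\geq 0\}$ for all $i$, i.e.\ $\spt\mu\subseteq G$; also $\mu(G)=L(1)=1$. Since $p$ is continuous and strictly positive on the compact set $G$, it satisfies $p\geq c>0$ there, whence $L(p)=\int_G p\,d\mu\geq c>0$, contradicting $L(p)\leq 0$. Therefore $p\in Q(G)$, i.e.\ $p\in Q_k(G)$ for some $k$.

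The two places where I expect genuine work are precisely the ones on which the theorem turns. First, $Q(G)$ need not be closed, so the separation must be run through the order-unit structure furnished by the Archimedean property; this is exactly what converts \emph{strict} positivity of $p$ on $G$ (rather than mere nonnegativity) into membership in $Q(G)$ and not just in its closure. Second, the moment-representation step — manufacturing from a positive functional with controlled moments an honest measure supported \emph{precisely} on $G$ — requires care with both the boundedness estimate and the localization of the support. These two points are the technical heart of \cite{putinar1993positive}; once $p\in Q(G)$ is known, the existence of the degree bound $k$ is automatic.
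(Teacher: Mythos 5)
The paper does not prove this proposition: it is quoted from Putinar's paper \cite{putinar1993positive} and used as a black box, exactly as you surmise, so there is no in-paper proof to compare against. Your sketch is a correct outline of the argument in that reference (Hahn--Banach separation against the Archimedean quadratic module, followed by the spectral/moment representation of the separating functional and localization of its support to $G$); the only step you compress more than is comfortable is the derivation of the Archimedean property from the stated hypothesis that some $\{x : g_i(x)\geq 0\}$ is compact, which is itself a nontrivial lemma of Putinar's (essentially a one-constraint Positivstellensatz applied to $N-\|z\|^2$) rather than merely ``a short argument in the single variable $\|z\|^2$''.
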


Furthermore and importantly from a computational viewpoint, checking 
whether $p\in\Sigma^2$ reduces to checking whether a set of LMIs \cite{lasserre2010} involving the coefficients of $p$ has a solution. Therefore a more precise definition of inverse optimality in the context of polynomial Lagrangians is as follows (compare with Definition \ref{def:solution}).

\begin{definition}[polyIOCP and polyIOCP$_{\epsilon,k}$]
	\label{def:polySolution}
For $\epsilon >0$, given measures $\mu \in \mathcal{M}_+(C\times U)$ and $\mu_T \in \mathcal{M}_+(X_T)$ such that
$\mathrm{div}\:f\mu+\mu_T=\mu_0 \in \mathcal{M}_+(X_0)$, denote by  $\mathrm{polyIOCP}_{\epsilon,k}(\mu,\mu_T)$ 
the set of polynomials $l \in \RR[x,u]_{2k}$ (i.e., of degree at most $2k$) such that:
\[
\begin{array}{rcl}
		\left\langle \mu, l + \mathrm{grad}\:v\cdot f\right\rangle &\leq  & \epsilon,\\
		l + \mathrm{grad}\:v\cdot f + \epsilon & \in & Q_k(X \times U), \\
		\left\langle \mu_T, v\right\rangle &\geq & - \epsilon,\\
		- v &\in & Q_k(X_T),
\end{array}
\]
for some polynomial $v \in \RR[x]_{2k}$.

Denote also by $\mathrm{polyIOCP}(\mu, \mu_T)$ the set of polynomial solutions to the inverse optimal control problem: That is,
$l \in \mathrm{polyIOCP}(\mu, \mu_T)$ if for any $\epsilon >0$ there exists $k(\epsilon)$ such that
$l \in \mathrm{polyIOCP}_{\epsilon,k(\epsilon)}$.
\end{definition}

In other words, $\mathrm{polyIOCP}_{\epsilon,k}(\mu,\mu_T)$ is the set of polynomial $\epsilon$-solutions with degree bound $2k$, to the inverse optimal control problem.
The advantage of the previous definition, is that, provided that one has the possibility to compute the linear functionals 
$\mu\in\RR[x,u]^*$ and $\mu_T\in\RR[x]^*$, checking whether $l \in \mathrm{polyIOCP}_{\epsilon,k}(\mu, \mu_T)$  for $k$ and $\epsilon$ given reduces to solving a convex LMI problem \cite{lasserre2010}. Furthermore, under a compactness assumption, in the asymptotic regime this definition is equivalent to Definition \ref{def:solution}.

\begin{proposition}[Correctness of polynomial approximation] Suppose that one of the polynomials defining the basic semi-algebraic set $X$ (resp. $U$) has a compact super-level set. Then
	$$\mathrm{polyIOCP}(\mu, \mu_T) = \mathrm{IOCP}(\mu, \mu_T) \cap \RR[x,u].$$ 	
\end{proposition}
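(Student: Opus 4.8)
The proof splits into two inclusions, and only one of them requires work.

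\emph{The inclusion $\mathrm{polyIOCP}(\mu,\mu_T)\subseteq\mathrm{IOCP}(\mu,\mu_T)\cap\RR[x,u]$ is immediate.} If $l\in\mathrm{polyIOCP}(\mu,\mu_T)$ then $l$ is a polynomial and, for each $\epsilon>0$, Definition \ref{def:polySolution} supplies a degree $k$ and a polynomial $v\in\RR[x]_{2k}$ satisfying the four displayed conditions. Since every element of $Q_k(G)$ is non-negative on $G$, the same $v$, now regarded as an element of $\mathcal{C}^1(X)$, satisfies verbatim the four conditions of Definition \ref{def:solution} for the same $\epsilon$. Hence $l\in\mathrm{IOCP}_\epsilon(\mu,\mu_T)$ for all $\epsilon>0$, i.e. $l\in\mathrm{IOCP}(\mu,\mu_T)$.

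\emph{For the reverse inclusion} fix a polynomial $l\in\mathrm{IOCP}(\mu,\mu_T)$ and a target $\epsilon>0$; the goal is to exhibit a degree $k$ and a polynomial certificate witnessing $l\in\mathrm{polyIOCP}_{\epsilon,k}(\mu,\mu_T)$. Start from a certificate $v\in\mathcal{C}^1(X)$ for $l\in\mathrm{IOCP}_{\epsilon/2}(\mu,\mu_T)$. Extend $v$ to a $\mathcal{C}^1$ function on a box containing $X$ and approximate it, together with its gradient, uniformly on $X$ by a polynomial $\hat v$ (e.g. a Bernstein polynomial of $v$, whose derivatives converge uniformly), so that $\|\hat v-v\|_\infty$ and $\|\mathrm{grad}\:\hat v-\mathrm{grad}\:v\|_\infty$ are at most a $\delta>0$ to be fixed. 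This perturbation changes the two integral quantities of Definition \ref{def:solution} by amounts bounded by $\delta$ times the finite constants $\langle\mu,1\rangle$, $\langle\mu_T,1\rangle$ and $\sup_{X\times U}\|f\|$, and degrades the two non-strict positivity constraints by $O(\delta)$; the one genuinely problematic effect is that $-\hat v$ need no longer be $\ge 0$ on $X_T$. This is repaired by passing to $\tilde v:=\hat v-2\delta$, which leaves $\mathrm{grad}\:\tilde v=\mathrm{grad}\:\hat v$ (so the first two conditions are unaffected), lowers $\langle\mu_T,\tilde v\rangle$ by only $2\delta\langle\mu_T,1\rangle$, and yields $-\tilde v\ge -v+\delta\ge\delta>0$ strictly on $X_T$. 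Choosing $\delta$ small enough in terms of $\epsilon$ and the fixed data, $\tilde v$ is a polynomial with $\langle\mu,l+\mathrm{grad}\:\tilde v\cdot f\rangle\le\epsilon$ and $\langle\mu_T,\tilde v\rangle\ge-\epsilon$, while $l+\mathrm{grad}\:\tilde v\cdot f+\epsilon$ and $-\tilde v$ are \emph{strictly} positive on $X\times U$ and on $X_T$ respectively.

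It remains to turn strict positivity into Putinar certificates. After appending to the descriptions of $X$, $U$ and $X_T\subseteq X$ the redundant ball constraints implied by their compactness (which only enlarges the cones $Q_k(\cdot)$), the stated hypothesis guarantees that Proposition \ref{prop:putinar} applies on $X\times U$ and on $X_T$, so $l+\mathrm{grad}\:\tilde v\cdot f+\epsilon\in Q_{k_1}(X\times U)$ and $-\tilde v\in Q_{k_2}(X_T)$ for some $k_1,k_2$. Taking $k:=\max\{k_1,k_2,\lceil\deg(l)/2\rceil,\lceil\deg(\tilde v)/2\rceil\}$ makes $l\in\RR[x,u]_{2k}$, $\tilde v\in\RR[x]_{2k}$, and $\tilde v$ certifies $l\in\mathrm{polyIOCP}_{\epsilon,k}(\mu,\mu_T)$. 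Since $\epsilon>0$ was arbitrary, $l\in\mathrm{polyIOCP}(\mu,\mu_T)$, which completes the argument. I expect the main obstacle to be the second inclusion, specifically producing a \emph{polynomial} $\mathcal{C}^1$-type certificate that is strictly feasible — above all for the terminal constraint $-v\ge 0$ on $X_T$, which carries no slack — so that Putinar's Positivstellensatz can be invoked; once the shift $\tilde v=\hat v-2\delta$ is in place, propagating the $\epsilon$'s is routine.
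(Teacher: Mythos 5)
Your proof is correct and follows essentially the same route as the paper's: approximate the continuous certificate $v$ and its gradient simultaneously by a polynomial on the compact sets, then turn the resulting strict positivity into Putinar certificates via Proposition \ref{prop:putinar}. Your constant shift $\tilde v=\hat v-2\delta$ to restore strict feasibility of $-v$ on $X_T$ is a welcome refinement of a point the paper handles only loosely (it applies Putinar to $\epsilon/3-v_k$ rather than to $-v_k$ as the definition of $\mathrm{polyIOCP}$ actually requires).
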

\begin{proof}
	The direct inclusion is trivial. For the reverse inclusion, suppose that $l \in \mathrm{IOCP}(\mu, \mu_T) \cap \RR[x,u]$. Fix $\epsilon >0$, and take for $v$ a certificate that	 $l \in  \mathrm{IOCP}_\frac{\epsilon}{3}(\mu, \mu_T)$ as given by Definition \ref{def:solution}. Since we consider compact sets in finite dimensional spaces, both $v$ and its gradient $\mathrm{grad}\:v$ can be simultaneously approximated uniformly by a polynomial up to an arbitrary precision. Therefore as
$f$ is bounded on $X \times U$,  there exists a polynomial $v_k$ of degree $k$ such that $\sup_{X} |v -v_k| \leq \frac{\epsilon}{3}$ and $\sup_{X\times U} |\mathrm{grad}\:v\cdot f - \mathrm{grad}\:v_k\cdot f| \leq \frac{\epsilon}{3}$. Hence $l + \mathrm{grad}\:v_k\cdot f + \epsilon \geq \frac{\epsilon}{3} > 0$ on $X \times U$ and $\frac{\epsilon}{3} - v_k \geq \frac{\epsilon}{3} > 0$ on $X_T$. Using Proposition \ref{prop:putinar}, there exists $k_1$ and $k_2$ such that $l + \mathrm{grad}\:v_k\cdot f + \epsilon \in Q_{k_1}(X \times U)$ and $\frac{\epsilon}{3} - v_k \in Q_{k_2}(X_T)$. Then
$l \in \mathrm{polyIOCP}_{\epsilon,k}(\mu, \mu_T)$ whenever $k\geq\max(k, k_1, k_2)$ and
finally, $l \in \mathrm{polyIOCP}(\mu, \mu_T)$ because $\epsilon$ was arbitrary (fixed).
\end{proof}
	
All properties of Lagrangians in Definition \ref{def:solution} hold for the Lagrangians in Definition \ref{def:polySolution}. For example, as stated in Remark \ref{rem:aproxSolution}, if $l \in \mathrm{polyIOCP}_{\epsilon,k}(\mu, \mu_T)$, then $(\mu, \mu_T)$ is close to optimal for $\mathrm{OCP}(l, \mu_0, T)$. In particular, if we the moments of $\mu$ and $\mu_T$ are available then the latter property can be checked numerically by solving a semi-definite program.

\subsection{Integral discretization}
\label{sec:discrete}
For practical numerical computation in the context of Definition \ref{def:polySolution}, we still must be able to integrate 
polynomials with respect to $\mu$ and $\mu_T$. This is easy provided that we know the moments of $\mu$ and $\mu_T$. However, exact computation of such moments cane be complicated in practice, especially when $\mu$ is a superposition of trajectories. Usually, data sets from experiments consist of samples of trajectories which can be seen as realizations of a random sampling process. 

In this section we first describe how the framework of occupation measures can formally describe the process of sampling trajectories and we justify the replacement of measures $(\mu, \mu_T)$ by their empirical counterparts when considering empirical samples as input data for inverse control problems. In the context of polynomial certificates in Definition \ref{def:polySolution}, this amounts to replacing the moments of the measures $(\mu, \mu_T)$ by their empirical counterparts. Consider the probability measure $\mu_0$ on $X_0$ and the measures $(\mu, \mu_T)$ in equations (\ref{eq:supperpos1}) and (\ref{eq:supperpos2}). One way to interpret these measures is to consider the following random process:
\begin{itemize}
	\item choose $z \in \spt\:\mu_0$ randomly,
	\item choose $t$ randomly uniformly on $[0, T_z]$,
	\item output $\xi = (x_z(t), u_z(t))$.
\end{itemize}
This defines a generative process for the random variable $\xi$. If the probability for an initial condition $z$ to belong
to a Borel set $A \subset X_0$ is given by
\begin{align*}
	&\PP[z \in A]\\
	=&p_0(A) := \frac{\int_A T_z \mu_0(dz)}{\int_{X_0} T_z \mu_0(dz)},
\end{align*}
then the probability for a trajectory $\xi$ to belong to a Borel hyperrectangle $(A,B) \subset X\times U$ is given by
\begin{align*}	
 &\PP[\xi \in A\times B] \\
 =& p_\mu(A \times B) := \int_{X_0} \left( \int_0^{T_z} I(x_z(t) \in A)I(u_z(t) \in B) dt\right) \mu_0(dz) =
\frac{\mu(A\times B)}{\mu(X\times U)}.
\end{align*}
A statistical model for points $(x_i, u_i),\, i=1, \ldots, n$, that are samples of trajectories, is to assume that we repeat the previous process $n$ times, independently. In this case, we say that the database $\mathcal{D} = \{(x_i, u_i)\}_{i=1}^n$ is made of independent realizations of a random variable with underlying distribution $p_\mu$. The process which generates the database being random, we write 
\[
	\{\xi_i\}_{i=1}^n \overset{i.i.d.}{\sim} p_\mu.
\]
to stress that all $\xi_i$ are independent and identically distributed (\textit{i.i.d.}) according to $p_\mu$ (they are independent copies of the same random variable). Similarly, $\mu_T$ can be seen as the probability distribution describing the following random process:
\begin{itemize}
	\item choose $z \in \spt\:\mu_0$ randomly according to $p_0$,
	\item output $x_z(T_z)$.
\end{itemize}
We now define what is an approximate solution to the inverse problem when the only information available about $\mu$ is a realization, $\mathcal{D} = \{(x_i, u_i)\}_{i=1}^n$, of a random process, $\{\xi_i\}_{i=1}^n \overset{i.i.d.}{\sim} p_\mu$.

\begin{definition}[Sampled-IOCP$_{\epsilon,k}$]
	\label{def:approxSolution}
For $\epsilon > 0$ and $\mathcal{D} = \{(x_i, u_i)\}_{i=1}^n$, let $\mathrm{Sampled-IOCP}_{\epsilon,k}$ 
be the set of  polynomials $l \in \RR[x,u]_{2k}$ such that :
	\begin{align*}
		&\frac{1}{n} \sum_{i=1}^n l(x_i, u_i) + \mathrm{grad}\:v(x_i)\cdot f(x_i, u_i) \leq \epsilon,\\
		&l + \mathrm{grad}\:v\cdot f  \in Q_k(X \times U), \\
		&- v \in Q_k(X_T),\\
		&v + \epsilon \in Q_k(X_T),\\
		&\int_{\tilde{X} \times U} l + \mathrm{grad}\:v\cdot f = 1.
	\end{align*}
for some polynomial $v \in \RR[x]_{2k}$. 
In other words, $\mathrm{Sampled-IOCP}_{\epsilon,k}$ 
is the set of polynomial $\epsilon$-optimal solutions (with degree bound $k$) of the sampled inverse optimal control problem., 
\end{definition}

One has replaced $\mu$ by its empirical counterpart, added the normalization of Proposition \ref{prop:normalization} and simplified other conditions; see also Remark \ref{rem:finalMeasure}. 
\begin{center}
{\it Importantly, membership in $\mathrm{Sampled-IOCP}_{\epsilon,k}(\mathcal{D})$ can be tested by
semi-definite programming.}
\end{center}

Using arguments from empirical processes and learning theory, one can quantify the price to pay for this discretization. Of course since we assume that the process that generates the data include some randomness, such a quantification holds probabilitically.
\begin{proposition}
	\label{prop:unifApprox}
	Suppose that $\mathcal{D} = \{(x_i, u_i)\}_{i=1}^n$ is a realization of the random process $\{\xi_i\}_{i=1}^n \overset{iid}{\sim} p_\mu$. Then there exist constants $K_1(X, U, k)$, $K_2(X, U, k)$ that only depend on ($X,U)$, and $k\in\mathbb{N}$, such that for any $l \in \mathrm{Sampled-IOCP}_{\epsilon,k}(\mathcal{D})$ and any $0 < \delta < 1$:
	\[
		l \in \mathrm{polyIOCP}_{\epsilon'(n),k}(\mu, \mu_T)\mbox{ with probability }1-\delta,\]
		where 
		\[\epsilon'(n) = \epsilon + \frac{1}{\sqrt{n}}\left(K_1(X, U, k) + K_2(X, U, k) \sqrt{\ln \frac{2}{\delta}}\right),
	\]
	and where the randomness comes from the realization of $\mathcal{D}$.
\end{proposition}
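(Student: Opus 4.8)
The plan is to reuse, for a given $l\in\mathrm{Sampled-IOCP}_{\epsilon,k}(\mathcal{D})$, the very polynomial $v\in\RR[x]_{2k}$ that witnesses this membership, and to show that $(l,v)$ already satisfies the (weaker) requirements of $\mathrm{polyIOCP}_{\epsilon'(n),k}(\mu,\mu_T)$ once $\epsilon'(n)$ is allowed to absorb a stochastic deviation term. Set $\phi:=l+\mathrm{grad}\:v\cdot f$. First I would note that three of the four conditions of Definition \ref{def:polySolution} transfer deterministically and uniformly over $l$: (i) $-v\in Q_k(X_T)$ is literally one of the constraints of Definition \ref{def:approxSolution}; (ii) since $\phi\in Q_k(X\times U)$ and every nonnegative constant lies in $Q_k(X\times U)$ (take $g_0=1$), also $\phi+\epsilon'(n)\in Q_k(X\times U)$; (iii) the auxiliary constraint $v+\epsilon\in Q_k(X_T)$ forces $v\ge-\epsilon$ on $X_T\supseteq\spt\:\mu_T$, hence $\langle\mu_T,v\rangle\ge-\epsilon\,\langle\mu_T,1\rangle\ge-\epsilon'(n)$, using that $\langle\mu_T,1\rangle=\langle\mu_0,1\rangle$ (test the Liouville constraint (\ref{eq:liouville2}) against the constant $1$) is a fixed finite number, equal to $1$ when $\mu_0$ is a probability measure. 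Everything therefore reduces to the single integral inequality $\langle\mu,\phi\rangle\le\epsilon'(n)$.

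For that inequality I would write $\langle\mu,\phi\rangle=\mu(X\times U)\,\EE_{\xi\sim p_\mu}[\phi(\xi)]$ and combine it with the bound $\tfrac1n\sum_{i=1}^n\phi(\xi_i)\le\epsilon$ from Definition \ref{def:approxSolution} to get $\langle\mu,\phi\rangle\le\mu(X\times U)\,\bigl(\epsilon+\Delta_n(\phi)\bigr)$, where $\Delta_n(\psi):=\EE_{p_\mu}[\psi]-\tfrac1n\sum_{i=1}^n\psi(\xi_i)$. The delicate point is that $\phi$ depends on the realization $\mathcal{D}$ through the certificate $v$, so a pointwise concentration inequality is not enough: what is needed is a uniform control of $\Delta_n$ over a fixed class guaranteed to contain $\phi$ whatever certificate the semidefinite program returns.

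The key observation, and the step I expect to be the main obstacle, is that the normalization of Proposition \ref{prop:normalization} confines $\phi$ to a compact set. Indeed $\phi\in Q_k(X\times U)$ implies $\phi\ge0$ on $X\times U$ and $\deg\phi\le2k$, while the normalization gives $\int_{\tilde X\times U}\phi=1$; hence $\phi$ lies in
\[
\mathcal{G}:=\Bigl\{\psi\in\RR_{2k}[x,u]\ :\ \psi\ge0\ \text{on}\ X\times U,\ \textstyle\int_{\tilde X\times U}\psi=1\Bigr\},
\]
a set that refers neither to $l$ nor to $v$. Since $\tilde X\times U$ has nonempty interior (as is implicit in Assumption \ref{as:controlability} and in the normalization of Proposition \ref{prop:normalization}), a nonzero polynomial of degree $\le2k$ cannot be nonnegative on $X\times U\supseteq\tilde X\times U$ with vanishing integral over $\tilde X\times U$; thus $\mathcal{G}$ is the intersection of a pointed closed convex cone with an affine hyperplane in the finite-dimensional space $\RR_{2k}[x,u]$, hence compact. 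In particular the coefficient norm $\|\cdot\|_1$ is bounded on $\mathcal{G}$ by some $M_1(X,U,k)$. Without this normalization the admissible $\phi$'s would form an unbounded family and no uniform rate would be available.

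The last step is then a routine union-bound estimate. Each of the $D=D(X,U,k)$ monomials $z^\alpha$ of degree $\le2k$ is bounded on the compact set $X\times U$, so Hoeffding's inequality together with a union bound over these monomials gives, with probability at least $1-\delta$,
\[
\max_{|\alpha|\le2k}\bigl|\EE_{p_\mu}[z^\alpha]-\tfrac1n\textstyle\sum_{i=1}^n\xi_i^\alpha\bigr|\ \le\ \frac{c(X,U,k)}{\sqrt n}\Bigl(\sqrt{\ln D}+\sqrt{\ln\tfrac2\delta}\Bigr).
\]
On that event, expanding any $\psi\in\mathcal{G}$ in the monomial basis and bounding coefficientwise yields $|\Delta_n(\psi)|\le\|\psi\|_1\max_{|\alpha|\le2k}|\Delta_n(z^\alpha)|\le M_1\,c\,\bigl(\sqrt{\ln D}+\sqrt{\ln(2/\delta)}\bigr)/\sqrt n$; applying this to $\psi=\phi$, multiplying through by the bounded factor $\mu(X\times U)\le T_M$, and folding all fixed quantities ($M_1$, $c$, $D$, $T_M$) into $K_1(X,U,k)$ and $K_2(X,U,k)$ gives $\langle\mu,\phi\rangle\le\epsilon+\bigl(K_1(X,U,k)+K_2(X,U,k)\sqrt{\ln(2/\delta)}\bigr)/\sqrt n=\epsilon'(n)$. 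Together with the three deterministic transfers this shows that, with probability at least $1-\delta$, every $l\in\mathrm{Sampled-IOCP}_{\epsilon,k}(\mathcal{D})$ belongs to $\mathrm{polyIOCP}_{\epsilon'(n),k}(\mu,\mu_T)$, which is the claim. A less elementary alternative to the union bound would be to estimate $\EE\,\sup_{\psi\in\mathcal{G}}\Delta_n(\psi)$ via the Rademacher complexity of the bounded finite-dimensional class $\mathcal{G}$ and then upgrade to a high-probability statement using McDiarmid's bounded-difference inequality.
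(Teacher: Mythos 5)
Your proof is correct and its skeleton coincides with the paper's: reuse the certificate $v$ returned by the semidefinite program, observe that the Putinar constraints and the terminal-measure condition transfer deterministically (the paper handles $\mu_T$ exactly as you do, via Remark \ref{rem:finalMeasure}), and reduce everything to a deviation bound for $\langle \mu, l+\mathrm{grad}\:v\cdot f\rangle$ that must hold \emph{uniformly} over the data-independent class $\mathcal{G}=\{\psi\in\RR_{2k}[x,u]:\psi\ge 0$ on $X\times U,\ \int_{\tilde X\times U}\psi=1\}$ --- which is precisely the class $K_d(Z)$ of Lemma \ref{eq:finiteSampleBound} in Appendix \ref{sec:statLearn}, with compactness extracted from the normalization of Proposition \ref{prop:normalization} by the same pointed-cone/equivalence-of-norms argument. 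Where you diverge is in how the uniform concentration bound is proved: the paper applies McDiarmid's inequality to $\sup_{\psi\in\mathcal{G}}\langle\ell-\ell_n,\psi\rangle$ and controls its expectation by symmetrization and a Rademacher-complexity computation, yielding the term $2M_c^dM_v^d/\sqrt n$; you instead run Hoeffding plus a union bound over the $D=\binom{p+2k}{2k}$ monomials and then pass to general $\psi\in\mathcal{G}$ by $|\Delta_n(\psi)|\le\|\psi\|_1\max_\alpha|\Delta_n(z^\alpha)|$. Your route is more elementary and self-contained; its cost is an extra $\sqrt{\ln D}$ factor (harmlessly absorbed into $K_1(X,U,k)$) and a dependence on the coefficient $\ell_1$-norm rather than the Euclidean quantities $M_c^d,M_v^d$, while the paper's route gives marginally cleaner constants and is the one that scales to infinite-dimensional function classes. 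You are in fact somewhat more careful than the published proof on two points it glosses over: the identity $\langle\mu_T,1\rangle=\langle\mu_0,1\rangle$ justifying the terminal bound, and the renormalization factor $\mu(X\times U)$ relating $\langle\mu,\cdot\rangle$ to $\EE_{p_\mu}[\cdot]$ (which, strictly, also multiplies $\epsilon$ --- an imprecision shared by the paper's statement, not introduced by you).
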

\begin{proof}
	Apply Lemma \ref{eq:finiteSampleBound}  of Appendix \ref{sec:statLearn} to the polynomial $l +\mathrm{grad}\:v\cdot f$
 to get a bound on $\left\langle p_\mu, l + \mathrm{grad}\:v\cdot f\right\rangle$.
\end{proof}
\begin{remark}\label{rem:finalMeasure}
{\rm 	The conditions detailed in Definition \ref{def:approxSolution} ensure that $v \leq 0$ on $X_T$ and $\left\langle \mu_T, v\right\rangle \geq - \epsilon$, for any terminal measure. This is done in order to avoid to deal with the terminal measure $\mu_T$ but other alternatives are possible. For example, when $X_T$ is a single point or a simple algebraic set one may enforce 
(as we do in Section \ref{sec:numerics}) $v =0$ on $X_T$ instead. Another possibility is to replace $\mu_T$ by its empirical counterpart. However in this case we need to provide a lower bound or add constraints on $v$ to obtain finite sample bounds as described in Proposition \ref{prop:unifApprox}.
}\end{remark}
Proposition \ref{prop:unifApprox} mixes arguments from measure theory and conic optimization with arguments from empirical process and statistical learning theory. The implication of this result is that for a fixed degree, provided that the sample size is big enough, with high probability we do not loose much by approximating $\mu$ by an empirical sample. 

$\bullet$ We would like to emphasize here that it is necessary to restrict the complexity of the class of functions in which the candidate Lagrangian is searched. Indeed otherwise for instance, for any fixed sample, the polynomial 
$(x,u)\mapsto \prod_{i=1}^n ||x - x_i||^2||u - u_i||^2$ belongs to $\mathrm{Sampled-IOCP}_{0, 4n}(\mathcal{D})$, but this clearly does not give much insight on the original control problem! 
The degree of the polynomial candidates is one among many possible measures of complexity. 
Furthermore, although the constants are likely to be sub-optimal, they give a sense of how fast the degree of the polynomial approximation may grow with respect to the sample size in order to maintain accurate approximations of $\mu$. 

\section{Numerical illustrations}
\label{sec:numerics}
Building on results of Section \ref{sec:practical}, we next provide illustrative numerical simulations. In order to fit in the framework of the previous section, we consider examples where $f$ is a polynomial and $X$, $U$ and $X_T$ are basic semi-algebraic sets. In addition, the input data of the inverse problem is given by a finite database: $\mathcal{D} = \{(x_i, u_i)\}_{i=1,\ldots,n}$. 
In the sequel, the candidate Lagrangians satisfy Definition \ref{def:approxSolution}. 

To compute such Lagrangians, the main idea is to solve 	an optimization problem with fixed $k\in\mathbb{N}$, where:
\begin{itemize}
\item $l\in\RR[x,u]_{2k},v\in\RR[x]_{2k}$ and $\epsilon>0$ are the decision variables,
\item $\epsilon$ is the criterion  to minimize,
\item $\mathrm{Sampled-IOCP}_{\epsilon, k}(\mathcal{D})$ is the projection on $(l,\epsilon)$ of the set of feasible solutions $(l,\epsilon,v)$.
\end{itemize}
In addition, we also include a sparsity inducing term in the criterion that will prove to be useful in numerical experiments.

\subsection{Numerical experiments}
\subsubsection{Problem formulation}
We consider the following optimization problem:
\begin{equation}\tag{iocp}\label{eq:primal1}
\begin{array}{l@{\;}l}
\displaystyle \inf_{l, v, \epsilon} &  \epsilon + \lambda ||l||_1\\
\mathrm{s.t.}	&\frac{1}{n} \sum_{i=1}^n l(x_i, u_i) + \mathrm{grad}\:v(x_i)\cdot f(x_i, u_i) \leq \epsilon,\\
		&l + \mathrm{grad}\:v\cdot f  \in Q_k(X \times U), \\
		&v =0 \:\:\mathrm{on}\:\:X_T,\\
		&\int_{\tilde{X} \times U} l + \mathrm{grad}\:v\cdot f = 1.
\end{array}
\end{equation}
where $l\in\RR[x,u]_{2k}$, $v\in\RR[x]_{2k}$, $\epsilon$ is a real, $\lambda>0$ (fixed) is a given regularization parameter, and $||.||_1$ denotes the $\ell_1$ norm of a polynomial, \textit{i.e.} the sum of absolute values of its coefficients when expanded in the monomial basis. The first constraints come from Definition \ref{def:approxSolution} and the last affine constraint is meant to avoid the trivial solution; see Proposition \ref{prop:normalization}. The $\ell_1$ norm is not differentiable around sparse vectors (with entries equal to zero) and  has the sparsity promoting role to bias solutions of the problem towards polynomial Lagrangian solutions with  few nonzero coefficients. 

\begin{center}
This regularization affects problem well-posedness and will prove to be essential in numerical experiments.
\end{center}

\subsubsection{Numerical implementation}
Linear constraints are easily expressed in term of polynomial coefficients. A classical lifting allows to express the $\ell_1$ norm as a linear program: for $x \in \RR^n$, $||x||_1 = \min_s\sum_{i=1}^n s_i$ subject to $s_i \geq x_i$ and $s_i \geq -x_i$, for all $i$. The Putinar positivity certificates can be expressed as LMIs \cite{lasserre2010} whose size depends on the degree bound $k$. We use the SOS module  of the \texttt{YALMIP} toolbox \cite{lofberg2009pre} to manipulate and express polynomial constraints at a high level in \texttt{MATLAB}. The size of the corresponding LMI grows as ${n + k \choose k}$ where $n$ is the number of variables and $k$ the degree bound in Putinar certificates. Thus it is reasonable to consider relatively small problems. As shown in the numerical results section, we could handle problems with 5 variables and degree 10 with a reasonable amount of time and memory. To handle larger size problems, specific heuristics and techniques beyond the scope of this paper 
must be implemented.

\subsubsection{General setting}
We consider several direct problems of the same form as (\ref{eq:pdirect0}). That is, we give ourselves compact basic semi-algebraic sets $X$, $U$, $X_T$, the dynamics $f$, and a Lagrangian $l_0$. We take known examples for which the (direct) optimal control law can be computed and try to vary their degree of difficulty. Given these optimal state-control trajectories, we generate randomly $n$ data points $\mathcal{D} = \{(x_i, u_i)\}_{i=1 \ldots n}$ according to the random process described in Section \ref{def:approxSolution}. For a given value of $\lambda$ and $k$, we compute a solution ${l}$ of problem (\ref{eq:primal1}). Then we measure how $l$ is close to $l_0$ by computing the following quantity (in the monomial basis):
\begin{align}
	\min_\alpha \frac{||l_0 - \alpha {l}||_2}{||l_0||_2}=\left(1 - \frac{\left\langle l_0, {l} \right\rangle^2}{||l_0||_2^2||{l}||_2^2}\right)^\frac{1}{2}.
	\label{eq:metric}
\end{align}
We also report  the value of $\epsilon$ in program (\ref{eq:primal1}). A larger value of $\epsilon$ means less reliable numerical certificates; see Remark \ref{rem:aproxSolution}. 

\subsection{Illustration on a one-dimensional example}
\label{sec:pb1}

\begin{figure}[t]
	\centering
	\includegraphics[width=.9\textwidth]{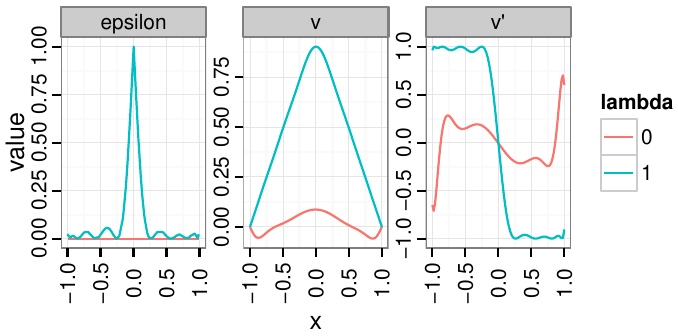}
	\caption{Solution for the one dimensional minimum exit time problem, effect of the regularization parameter $\lambda$. See Section \ref{sec:pb1} for problem details and comments. The first column is the distribution of the error $\epsilon$. It represents the value of $l(x, u) + \mathrm{grad}\:v\cdot f(x, u)$ as a function of $x$ where $u = \mathrm{sign}\:x$ is the optimal control. The second column is a representation of the value function ${v}$ and the third column is a representation of its derivative for solutions of problem (\ref{eq:primal1}) with and without regularization. We take 100 points on the segment. Lagrangian ${l}$ and  value function ${v}$ are both polynomials of degree 16.}
	\label{fig:oneDexit}
\end{figure}

First consider the eikonal problem of minimum exit time from the unit ball in the one-dimensional case. The data of the problem are
\begin{align*}
	X  = U =B_1, \, X_T = \partial X, \, l_0= 1, \, f = u.	
\end{align*}
The optimal law for this problem is $u = \text{sign}\:x$ and the value function is $v_0(x) = 1 - |x|$. We sample 100 points uniformly in $X$ and solve problem (\ref{eq:primal1}). We compare the choices $\lambda = 0$ (no regularization) and $\lambda = 1$. Results are presented in Figure \ref{fig:oneDexit} which displays the distribution of the error $\epsilon$, the estimated value function $v$ as well as its first derivative. Despite the simplicity of the problem, it is quite representative of the difficulties that arise in the context of inverse optimality. The first difficulty is the size of the set of solutions to the inverse problem:
\begin{itemize}
	\item given any symmetric differentiable concave function ${v}$ vanishing on $\{-1, 1\}$, the pair $({l} = |{v}'|, {v})$ solves problem (\ref{eq:primal1}) with $\epsilon = 0$;
	\item any positive polynomial on $X \times U$ vanishing if $|u| = 1$ solves problem (\ref{eq:primal1}) with $\epsilon = 0$;
	\item any Lagrangian of the form $v'(x) u$ solves problem (\ref{eq:primal1}) with $\epsilon = 0$;
	\item any convex combination of solutions of the types mentioned above also solves problem (\ref{eq:primal1}).
\end{itemize}
Even though formally accurate, these solutions form a relatively large set and do not carry any physical meaning. In the absence of any additional form of prior knowledge, it is impossible to discriminate between these solutions and the one that we wish to recover,
namely $l_0$. This is illustrated in Figure \ref{fig:oneDexit} where the red line ($\lambda = 0$) displays an example of value function $v$ obtained with a very low value of $\epsilon$. This is a very good certificate that our database $\mathcal{D}$ is close to optimal for the corresponding Lagrangian $l$. However, the estimated Lagrangian is far from the original one, namely $l_0 = 1$. Moreover, the shape of the value function is quite uncommon. This motivates the use of prior knowledge to bias the solutions of problem (\ref{eq:primal1}) toward a certain set of solutions. We use the $\ell_1$ norm which tends to promote Lagrangians with few non-zero coefficients.

When $\lambda = 1$, the sparsity inducing effect of $\ell_1$-norm regularization allows to recover the true Lagrangian ($l_0 = 1$) which is indeed sparse. The solution of problem (\ref{eq:primal1}) involves a polynomial function $v$ which should in principle be close to the true value function $v_0(x) = 1 - |x|$. The $v$ function displayed in Figure $1$ is close to $v_0$. However, $v_0$ is not smooth around the origin and therefore its derivative is harder to approximate by polynomials around this point. Hence the value of the error is higher around the origin.

\subsection{Illustration on more complex problems}
\begin{figure}[t]
	\centering
	\includegraphics[width=0.9\textwidth]{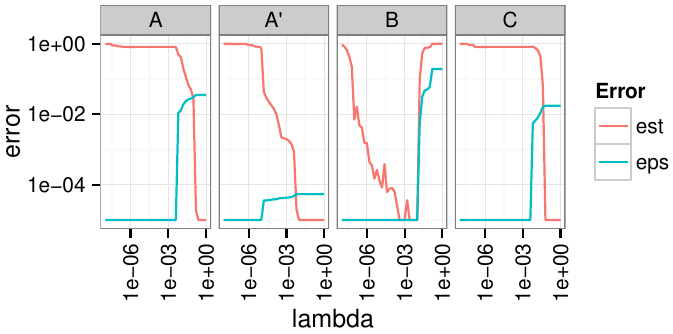}
	\caption{Error versus regularization parameter $\lambda$. Problems details are given in the main text. Estimation error (est) is given in (\ref{eq:metric}). Epsilon error (eps) is the value of $\epsilon$ in program (\ref{eq:primal1}). Trajectory sample size: 20 for A, A' and B, 50 for C. Degrees of ${l}$ and $v$ are 4 and 10 respectively.}
	\label{fig:deterministic}
\end{figure}
These simulations are taken from \cite{pauwels2014inverse}. We consider the following free terminal time direct problems:
\paragraph{Minimum exit time in dimension 2:}
\begin{align}
\label{sec:pb2}
	X = U = B_2, \, X_T = \partial X, \, l_0= 1, \, f = u.	
\end{align}
The optimal law is $u = \frac{x}{||x||_2}$ and the value function is $v_0(x) = 1 - ||x||_2$.
\paragraph{Minimum exit norm in dimension 2:}
\begin{align}
	\label{sec:pb3}
	X = U = B_2, \, X_T =  \partial X, l_0= ||x||_2^2 + ||u||_2^2, \, f = u.	
\end{align}
The optimal law is $u = x$ and the value function is $v_0(x) = 1 - ||x||_2^2$.
\paragraph{Minimum time Brockett integrator:}
\begin{align}
\label{sec:pb4}
	X &= 3B_3, \, U = B_2,\, l_0=1,\,f= (u_1, u_2, u_1x_2 - u_2x_1).
\end{align}
Recall that the Brockett integrator of nonlinear systems control
is also known (up to a change of coordinates) 
as the unicycle or Dubins system, one of the simplest instance of a non-holonomic system in robotics,
see e.g. \cite{devon2007} for the connection.
The optimal law and value function are described in \cite{prieur2005robust}. Complementary details are found in Appendix B of \cite{pauwels2014linear}.

\paragraph{Data generation:}
We consider the following settings:
\begin{itemize}
	\item[A] problem (\ref{sec:pb2}) with samples from $B_2$;
	\item[A'] problem (\ref{sec:pb2}) with samples from $B_2 \setminus \frac{1}{2}B_2$;
	\item[B] problem (\ref{sec:pb3}) with samples from $B_2$;
	\item[C] problem (\ref{sec:pb4}) with samples from $B_3 \setminus \left( \frac{1}{2}B_2 \times \RR \right)$.
\end{itemize}
In all cases we fix the degree of ${l}$ to $4$ and that of ${v}$ to $10$.

\paragraph{Results:} The results for the four problems are presented in Figure \ref{fig:deterministic}. For all problems, ${l}$ is of degree 4. Therefore, $l$ is to be found in a space of dimension 70 for problems A, A' and 126 for problem $D$. When the estimation error is close to 1, we estimate a Lagrangian ${l}$ that is orthogonal to $l_0$ (in the monomial basis), and when it is close to $0$, they are colinear. We also display the $\epsilon$ value of (\ref{eq:primal1}). We consider that the estimation is reasonable, when both estimation error and $\epsilon$ values are low.

$\bullet$ For all problems we are able to recover the true Lagrangian with good accuracy for some value of the regularization parameter $\lambda$. In the absence of regularization, we do not recover the true Lagrangian at all. This highlights the important role of $\ell_1$ regularization which allows to bias the estimation toward {\it sparse polynomials}. The choice of $\lambda$ in practical settings is subject to heuristics: numerical simulations or cross-validation which consists in keeping a portion of the input data as a validation set. 

$\bullet$ For all four problems, when the estimation error is minimal, the value of $\epsilon$ is reasonably low, depending on how the value function can be approximated by a polynomial. For example, A' shows lower $\epsilon$ value because we avoid sampling database points close to the non-differentiable point of the true value function. In example D, the value function is known to be harder to approximate by polynomials and the value of $\epsilon$ is a bit larger. The estimation accuracy is still very reasonable.

\section{Conclusion}

The main contribution of this paper is to propose a general framework to analyze the inverse problem of optimal control. The analysis is based on the weak formulation of direct optimal control problems using occupation measures, relaxed Hamilton-Jacobi-Bellman optimality conditions, and duality of infinite-dimensional linear programs. The proposed formulation is powerful enough to ensure that there is no gap between solutions of the direct and inverse problem (Theorem \ref{th:solution}). To the best of our knowledge this is the first result of this kind. In addition, in principle the proposed methodology is applicable to practical problems where we only have access to sample trajectories. We have also proposed numerical and statistical approximation procedures from which solid theoretical guaranties can be obtained. Finally we have illustrated our results on 
relatively simple (but not trivial) numerical examples of modest size.

One of the most striking aspects of the inverse problem is its set of valid solutions. Indeed, even for the simplest problems it is difficult to discriminate between physically meaningful Lagrangians and spurious mathematical solutions. For this reason, formulating the inverse problem as a well-posed problem (in particular with a unique solution) requires the introduction of strong prior knowledge -- sometimes arguably too restrictive -- about the nature of the Lagrangian to be recovered; see for example \cite{ajami2013humans}. However the proposed formulation based on relaxed HJB-optimality conditions, allows to get intuitions about characteristics that affect well-posedness of the problem. 

This work is to be seen as a first step toward a theoretical and practical framework for the resolution of inverse problems in a variety of contexts. Further aspects of the problem have to be investigated within this realm. First, we only deal with deterministic trajectories. For practical purposes it is essential to consider the effect of experimental noise, both from theoretical and practical perspectives, and to determine to which extent and how the problem can be solved in this more difficult context. Second, we have proposed a numerical scheme to approximate solutions and show that it is effective on academic examples of modest size. Experimental validation of such approximations should be carried out on real world examples of larger size. This involves a lot of data processing and fine tuning for each specific example. In this perspective, humanoid robotics provides an active and attractive field of application \cite{arechavaleta2008optimality,mombaur2010human}.

\section*{Acknowledgments}

This work was partly funded by an award of the {\it Simone and Cino del Duca foundation} of Institut de France,
a grant of the Gaspard Monge program (PGMO) of the  {\it Fondation math\'ematique Jacques Hadamard}. Most of this work was carried out during Edouard Pauwels' postdoctoral stay at LAAS-CNRS. The authors would like to thank Fr\'ed\'eric Jean, Jean-Paul Laumond, Nicolas Mansard and Ulysse Serres for fruitful discussions.

\appendix
\section{Proof of Proposition \ref{prop:unifApprox}}
\label{sec:statLearn}
We develop uniform finite sample bounds that hold with high probability for arbitrary probability distribution in the context of polynomial functions. These are in particular useful to derive bounds for the random process described by occupation measures as exposed in Section \ref{sec:discrete}. The techniques used have become fairly standard in empirical process theory and statistical learning theory, see for example \cite{bousquet2004introduction} for a nice introduction.

In what follows, we consider a compact set $Z \subset \RR^p$ with non empty interior. For a polynomial $z \in \RR^p \mapsto q \in \RR_d[z]$ of degree $d$, $c(q)$ denotes its coefficients in the monomial basis (of size $\binom{p+d}{d}$). Similarly for a point $z \in Z$, $v(z)$ denotes the $\binom{p+d}{d}$ dimensional vector representing the evaluation of the corresponding monomials at $z$ such that $q(z) = c(q)\:\cdot\:v(z)$ with the dot denoting the inner product. We consider the following set of polynomials $K_d(Z) = \{q \in \RR_d[z],\, q \geq 0 \:\:\text{on}\:\:Z,\, \int_{\tilde{Z}} q = 1 \}$ where $\tilde{Z}$ is a closed subset of $Z$ with nonempty interior. We fix an arbitrary probability distribution $P$ on $Z$. We denote by $\ell$ the linear functional on the space $\mathcal{C}(Z)$ such that 
$$\left\langle \ell, f \right\rangle = \int_Z f(z) P(dx).$$
Similarly for a sample of size $n$, $S_n = \{z_1, \ldots, z_n\} \in Z^n$, drawn \emph{iid} from $P$, we denote by $\ell_n$ the linear functional on the space $\mathcal{C}(Z)$ such that
$$\left\langle \ell_n, f \right\rangle = \frac{1}{n}\sum_{i=1}^n f(z_i)$$
for any function $f$ continuous on $Z$.
\begin{lemma}
	For any $0 \leq \delta \leq 1$ and any $q \in K_d(Z)$, it holds with probability $1 - \delta$ that
$$\left\langle \ell, q \right\rangle \leq \left\langle \ell_n, q \right\rangle + 2 \frac{M_c^d(Z) M_v^d(Z)}{\sqrt{n}} + M_\infty^d(Z) \sqrt{\frac{1}{2n} \ln \frac{2}{\delta}},$$
where 
\begin{align*}
	M_\infty^d(Z) &= \sup_{p \in K_d(Z),\, z \in Z} p(z),\\
	M_c^d(Z) &= \sup_{p \in K_d(Z)} ||c(p)||,\\
	M_v^d(Z) &= \sup_{z \in Z} ||v(z)||_2
\end{align*}
	are finite quantities that only depend on $X$ and $d$.
	\label{eq:finiteSampleBound}
\end{lemma}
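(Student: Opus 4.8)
The plan is to prove a uniform (over the class $K_d(Z)$) concentration bound for $\langle \ell, q \rangle - \langle \ell_n, q \rangle$ by combining a bounded-differences (McDiarmid) inequality with a symmetrization/Rademacher-complexity argument, exploiting the linearity of $q \mapsto q(z) = c(q) \cdot v(z)$ in the coefficient vector. First I would fix $q \in K_d(Z)$ and consider the random variable $\Phi(z_1,\dots,z_n) = \sup_{p \in K_d(Z)} \big( \langle \ell, p \rangle - \langle \ell_n, p \rangle \big)$; since every $p \in K_d(Z)$ is non-negative on $Z$ and satisfies $\int_{\tilde Z} p = 1$, it is bounded on $Z$ by $M_\infty^d(Z)$, which is finite because $K_d(Z)$ is a compact subset of the finite-dimensional space $\RR_d[z]$ (the normalization $\int_{\tilde Z} p = 1$ together with non-negativity makes $K_d(Z)$ bounded, hence compact, and $p \mapsto \sup_{z \in Z} p(z)$ is continuous). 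Changing one sample point $z_i$ changes $\langle \ell_n, p \rangle$ by at most $M_\infty^d(Z)/n$, uniformly in $p$, so $\Phi$ has bounded differences $M_\infty^d(Z)/n$ and McDiarmid gives $\Phi \le \EE[\Phi] + M_\infty^d(Z)\sqrt{\tfrac{1}{2n}\ln\tfrac{1}{\delta}}$ with probability $1-\delta$ (I would use $\delta/2$ or adjust to match the $\ln\frac{2}{\delta}$ in the statement, the factor $2$ presumably coming from a later two-sided use or a union bound in Proposition \ref{prop:unifApprox}).

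Next I would bound $\EE[\Phi]$ by the standard symmetrization inequality, $\EE[\Phi] \le 2\,\EE_{S_n,\sigma}\big[ \sup_{p \in K_d(Z)} \tfrac{1}{n}\sum_{i=1}^n \sigma_i p(z_i) \big]$ where the $\sigma_i$ are i.i.d.\ Rademacher signs. Writing $p(z_i) = c(p)\cdot v(z_i)$ and using Cauchy--Schwarz over the coefficient vector, $\sup_{p \in K_d(Z)} \tfrac1n \sum_i \sigma_i c(p)\cdot v(z_i) \le \big(\sup_{p} \|c(p)\|\big)\cdot \tfrac1n \big\| \sum_i \sigma_i v(z_i) \big\|_2 = M_c^d(Z) \cdot \tfrac1n \|\sum_i \sigma_i v(z_i)\|_2$. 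Taking expectations and using Jensen, $\EE_\sigma \|\sum_i \sigma_i v(z_i)\|_2 \le \big(\EE_\sigma \|\sum_i \sigma_i v(z_i)\|_2^2\big)^{1/2} = \big(\sum_i \|v(z_i)\|_2^2\big)^{1/2} \le \sqrt{n}\, M_v^d(Z)$, where the cross terms vanish by independence of the signs. This yields $\EE[\Phi] \le 2 M_c^d(Z) M_v^d(Z)/\sqrt{n}$, and combining with the McDiarmid step gives exactly the claimed inequality; finiteness of $M_c^d(Z)$ again follows from compactness of $K_d(Z)$, and finiteness of $M_v^d(Z)$ from compactness of $Z$ and continuity of $z \mapsto v(z)$.

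The main obstacle, and the only genuinely non-routine point, is establishing that the three suprema $M_\infty^d(Z), M_c^d(Z), M_v^d(Z)$ are finite — equivalently, that $K_d(Z)$ is a bounded (hence compact) subset of the finite-dimensional space $\RR_d[z]$. The crux is that on a set $\tilde Z$ with non-empty interior the linear functional $p \mapsto \int_{\tilde Z} p$ is strictly positive on the (closed, pointed) cone of polynomials of degree $\le d$ that are non-negative on $Z \supseteq \tilde Z$, so the slice $\{\int_{\tilde Z} p = 1\}$ of that cone is compact; I would argue this by noting that $\|c(p)\| \to \infty$ along a sequence in $K_d(Z)$ would, after passing to a subsequence with $c(p)/\|c(p)\| \to c^\star \ne 0$, force a nonzero limit polynomial $p^\star \ge 0$ on $Z$ with $\int_{\tilde Z} p^\star = 0$, hence $p^\star \equiv 0$ on $\tilde Z$ and therefore (being a polynomial vanishing on a set with non-empty interior) $p^\star \equiv 0$, contradicting $c^\star \ne 0$. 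Everything else — the bounded-differences constant, the symmetrization inequality, the Rademacher contraction via linearity — is standard, so I would keep those steps brief and concentrate the write-up on this compactness argument.
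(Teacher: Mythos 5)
Your proposal is correct and follows essentially the same route as the paper: define the supremum deviation $F=\sup_{p\in K_d(Z)}\langle \ell-\ell_n,p\rangle$, apply McDiarmid with bounded differences $M_\infty^d(Z)/n$, symmetrize to a Rademacher complexity, and bound it via $p(z)=c(p)\cdot v(z)$, Cauchy--Schwarz and Jensen; your subsequence argument for compactness of $K_d(Z)$ is just an unpacked version of the paper's observation that $p\mapsto\int_{\tilde Z}|p|$ is a norm on $\RR_d[z]$ (using the non-empty interior of $\tilde Z$) equivalent to all others in finite dimension. The only cosmetic discrepancy is the $\ln\frac{2}{\delta}$, which in the paper comes simply from quoting the two-sided form of McDiarmid's inequality, so your one-sided $\ln\frac{1}{\delta}$ is in fact slightly sharper.
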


\begin{proof}
	The proof combines standard arguments from statistical learning which we describe here for completeness.
In the sequel, given a probability distribution $P$ on $Z$, we use the notation
\[
\PP[A] = \int_A P(dz_1) \ldots P(dz_n), \quad
\EE_z[F] = \int_A F(z_1,\ldots,z_n) P(dz_1) \ldots P(dz_n)
\] 
 and we rely on the following concentration result:
	\begin{lemma}[McDiarmid's inequality \cite{mcdiarmid1989bounded}]
	Assume for all i = 1, \ldots , n,
	$$\sup_{z_1 ,\dots,z_n ,z_i' \in Z} |F (z_1 , \ldots , z_i , \ldots, z_n ) - F (z_1 , \ldots, z_i' , \ldots , z_n )| \leq \alpha,$$
	then, for all $\epsilon \geq 0$, when $\{z_i\}_{i=1}^n$ is drawn iid from a probability distribution $P$ on $Z$, we have
	$$\PP\left[ |F - \EE_z\left[ F \right]| \geq \epsilon \right] \leq 2 \exp\left( - \frac{2 \epsilon^2}{n\alpha^2} \right),$$
	where the expectation is taken over the random sample. An equivalent formulation is that for $0 \leq \delta \leq 1$, with probability $1 - \delta$, it holds that
	$$F \leq \EE_z\left[ F \right] + \alpha \sqrt{\frac{n}{2} \ln \frac{2}{\delta}}.$$
	\label{eq:thMcDiarmid}
\end{lemma}
	We consider the following quantity
	$$F(z_1, \ldots, z_n) := \sup_{p \in K_d(Z)} \left\langle \ell - \ell_n, p \right\rangle.$$
	Observe that $K_d(Z)$ is a subset of a finite dimensional space and that for $p \in K_d(Z)$, we have $||p||^{\tilde{Z}}_1 = \int_{\tilde{Z}} |p| = 1$. Since all norms are equivalent, $K_d(Z)$ is bounded in any given norm on polynomials, in particular, the supremum norm. Therefore, the quantity
	$$M_\infty^d(Z) :=  \sup_{p \in K_d(Z)} ||p||_\infty^{\tilde{Z}} := \sup_{p \in K_d(Z), z \in Z} p(z)$$
	is finite. We have that for all $i$, and any $z_1, \ldots, z_n, z_i'$ and any $p \in K_d(Z)$
	$$|F(z_1, \ldots, z_i, \ldots, z_n) - F(z_1, \ldots, z_i', \ldots, z_n)| \leq \sup_{p \in K_d{(Z)}} \frac{1}{n} |p(z_i) - p(z_i')| \leq \frac{M_\infty^d(Z)}{n}.$$
	Therefore McDiarmid's inequality of Lemma \ref{eq:thMcDiarmid} applies to function $F$ with $\alpha = \frac{M_\infty^d(X)}{n}$, and, for any $q \in K_d{(Z)}$, with probability $1 - \delta$, it holds that
	\begin{align}
		\left\langle \ell - \ell_n, q \right\rangle & \leq \sup_{p \in K_d(Z)} \left\langle \ell - \ell_n, p \right\rangle \leq \EE_z\left[\sup_{p \in K_d(Z)} \left\langle \ell - \ell_n, p \right\rangle\right] + M_\infty^d(Z) \sqrt{\frac{1}{2n} \ln \frac{2}{\delta}}. 
		\label{eq:intermediate1}
	\end{align}
The left hand side depends on the random draw of the sample $\{z_i\}_{i=1}^n$, but the right hand side is deterministic. We use a standard symmetrization argument to bound the expectation in the right hand side. Using the definition of $\ell$ and $\ell_n$, the convexity of the supremum and Jensen's inequality, we have that
	\begin{align}
		\label{eq:intermediate2}
		\EE_z\left[\sup_{p \in K_d(Z)} \left\langle \ell - \ell_n, p \right\rangle\right] &= \EE_z\left[\sup_{p \in K_d(Z)} \EE_{z'}\left[\left\langle \ell_n',p\right\rangle\right] - \left\langle \ell_n, p \right\rangle\right]\\ 
		&\leq \EE_{z,z'}\left[\sup_{p \in K_d(Z)} \left\langle \ell_n' - \ell_n,p\right\rangle\right] \nonumber\\
		&= \EE_{z,z'}\left[\sup_{p \in K_d(Z)} \frac{1}{n} \sum_{i=1}^n p(z_i') - p(z_i)\right]\nonumber
	\end{align}
	where the notation $z'$ refers to any other sample $S' = \{z_i'\}_{i=1}^n$ drawn from $P$ and $\ell_{n'}$ is the corresponding empirical measure. The \emph{iid}	assumption allows to flip $z_i$ and $z_i'$ in the expectation. Let $\xi_i$ be Rademacher variables, i.e. random variables which take values in $\{-1, 1\}$, each with probability one half. We have
	\begin{align}
		\label{eq:intermediate3}
		&\EE_{z,z'}\left[\sup_{p \in K_d(Z)} \frac{1}{n} \sum_{i=1}^n p(z_i') - p(z_i)\right] \\ 
		=&\EE_{z,z',\xi}\left[\sup_{p \in K_d(Z)} \frac{1}{n} \sum_{i=1}^n \xi_i(p(z_i') - p(z_i))\right]\nonumber\\
		\leq &\EE_{z,z',\xi}\left[\sup_{p \in K_d(Z)} \frac{1}{n} \sum_{i=1}^n \xi_i p(z_i') + \sup_{p \in K_d(Z)} \frac{1}{n} \sum_{i=1}^n -\xi_i p(z_i)\right] \nonumber\\
		= &2 \EE_{z,\xi}\left[\sup_{p \in K_d(Z)} \frac{1}{n} \sum_{i=1}^n \xi_i p(z_i)\right] \nonumber \\
		= &2 \EE_{z,\xi}\left[\sup_{p \in K_d(Z)} \frac{1}{n} c(p) \cdot \sum_{i=1}^n \xi_i v(z_i)\right]. \nonumber
	\end{align}
	The quantity on the right hand side is known as the Rademacher complexity of the function class $K_d(Z)$. Intuitively, it measures to which extent elements of a function class correlate with random noise in a worst case scenario. The function $p \to ||c(p)||_2$ is a norm on polynomials and since $K_d(Z)$ is bounded, the quantity
$$M_c^d(Z) := \sup_{p \in K_d(Z)} ||c(p)||_2$$
is finite. Moreover, since $Z$ is compact, the quantity
$$M_v^d(Z) := \sup_{z \in Z} ||v(z)||_2$$
is also finite and attained. We have that
\begin{align*}	
	\sup_{p \in K_d(Z)} \frac{1}{n} c(p) \cdot \sum_{i=1}^n \xi_i v(z_i) \leq &\frac{M_c^d(Z)}{n} \left|\left|\sum_{i=1}^n \xi_i v(z_i) \right|\right|_2 \\
	= & \frac{M_c^d(Z)}{n} \sqrt{\sum_{i=1}^n \sum_{j=1}^n \xi_i \xi_j v(z_i) \cdot v(z_j)}.
\end{align*}
Moreover, $\EE_\xi \left[\sum_{j=1}^n \xi_i \xi_j v(z_i) \cdot v(z_j)\right] = \sum_{i=1}^n v(z_i)^2$ ($\EE_\xi [\xi_i \xi_j] = I(i = j)$). Therefore, using Jensen's inequality (with concavity of the square root), we obtain
$$\EE_{\xi}\left[\sup_{p \in \mathcal{K}} \frac{1}{n} c(p) \cdot \sum_{i=1}^n \xi_i v(x_i)\right] \leq \frac{M_c^d(X)}{n} \sqrt{\sum_{i=1}^n ||v(x_i)||_2^2} \leq \frac{M_c^d(X) M_v^d(X)}{\sqrt{n}}.$$
Putting things together, using inequalities (\ref{eq:intermediate1}), (\ref{eq:intermediate2}), (\ref{eq:intermediate3}), we have that with probability $1 - \delta$, it holds
$$\left\langle c, q \right\rangle \leq \left\langle c_n, q \right\rangle + 2 \frac{M_c^d(X) M_v^d(X)}{\sqrt{n}} + M_\infty^d(X) \sqrt{\frac{1}{2n} \ln \frac{2}{\delta}}.$$

\end{proof}

\end{document}